\newtheorem{theorem}{Theorem}[section]
\newtheorem{lemma}[theorem]{Lemma}
\newtheorem{corollary}[theorem]{Corollary}
\newtheorem{proposition}[theorem]{Proposition}
\theoremstyle{remark}
\numberwithin{equation}{section}
\theoremstyle{definition}
\renewcommand{\epsilon}{\varepsilon}
\newcommand{\e}{\epsilon}
\newcommand{\R}{\mathbb{R}}
\newcommand{\N}{\mathbb{N}}
\newcommand{\POI}{\mathbf{P}}
\newcommand{\EEEE}{\mathbf{E}}
\begin{document}

\title[Minimizing cones for fractional capillarity problems]{Minimizing cones \\ for fractional capillarity problems}

\author[S. Dipierro]{Serena Dipierro}
\address{Serena Dipierro \textsuperscript{1}}
\author[F. Maggi]{Francesco Maggi}
\address{Francesco Maggi \textsuperscript{2}}
\author[E. Valdinoci]{Enrico Valdinoci}
\address{Enrico Valdinoci \textsuperscript{1}}

\address{{\textsuperscript{1}
University of Western Australia,
Department of Mathematics and Statistics,
35 Stirling Highway,
Crawley, Perth,
WA6009, Australia}}

\email{serena.dipierro@uwa.edu.au, enrico.valdinoci@uwa.edu.au}

\address{{\textsuperscript{2}
University of Texas at Austin,
Department of Mathematics,
2515 Speedway,
Stop C1200, Austin TX 78712-1202, USA}
}

\email{maggi@math.utexas.edu}

\thanks{The first author is supported by
the Australian Research Council DECRA DE180100957
``PDEs, free boundaries and applications''. The second author is supported by
the NSF Grant DMS 2000034. The third author is supported by
the Australian Laureate Fellowship
FL190100081
``Minimal surfaces, free boundaries and partial differential equations''.
The first and third authors are members of INdAM and AustMS}

\subjclass[2010]{76B45, 49Q15, 35R11}
\keywords{Capillarity problems,
nonlocal perimeter, extension methods, monotonicty formulae, classification results}

\begin{abstract} We consider a fractional version of Gau{\ss} capillarity energy. A suitable extension problem is introduced to derive a boundary monotonicity formula for local minimizers of this fractional capillarity energy. As a consequence, blow-up limits of local minimizers are shown to subsequentially converge to minimizing cones. Finally, we show that in the planar case there is only one possible fractional minimizing cone, the one determined by the fractional version of Young's law.
\end{abstract}

\maketitle

\section{Introduction}

In this article we consider local minimizers in the fractional capillarity model introduced in~\cite{MR3717439}, analyze their blow-up limits at boundary points, show, by means of a new monotonicity formula, that these blow-up limits are cones, and give a complete characterization of such cones in the planar case.

\medskip

In the classical capillarity model of Gau{\ss}, see \cite{MR816345}, one studies equilibrium configurations of liquid droplets $E$ in a container $\omega\subset\R^n$, $n\ge2$, by looking at (volume-constrained) local minimizers of the (dimensionally re-normalized) surface tension energy 
\[
{\mathcal{H}}^{n-1}(\omega\cap\partial E)+ \sigma {\mathcal{H}}^{n-1}(\partial\omega\cap\partial E)\,,
\]
where $\sigma\in(-1,1)$ is the (constant) {\it relative adhesion coefficient} determined by the physical properties of the liquid and of the walls of the container. In the model introduced in~\cite{MR3717439}, see \eqref{CAP} below, the liquid-air surface energy term ${\mathcal{H}}^{n-1}(\omega\cap\partial E)$ is replaced by the nonlocal interaction between points $x\in E$ and in $y\in\omega\setminus E$; while the liquid-solid surface energy term ${\mathcal{H}}^{n-1}(\partial\omega\cap\partial E)$ 
is replaced by the nonlocal interaction between points $x\in E$ and $y\not\in\omega$. These nonlocal interactions are measured by the singular fractional kernel $|x-y|^{s-n}$, $s\in(0,1)$: as $s\to 1^-$, they are increasingly concentrated, respectively, at points $x$ and $y$ near $\omega\cap\partial E$ and $\partial\omega\cap\partial E$. For this reason, {\it the fractional capillarity model provides a nonlocal approximation of the Gau{\ss} capillarity model in the limit $s\to 1^-$}.

This happens also at the level of the classical equilibrium conditions expressed by the constancy of the mean curvature of $\omega\cap\partial E$ and by the contact angle condition between the liquid-air interface and the walls of the container, valid along $\partial\omega\cap\overline{\omega\cap\partial E}$, and known as {\bf Young's law}. The validity of a {\bf fractional Young's law} (see \eqref{ANGO} below) for sufficiently regular local minimizers of the fractional capillarity energy has been proved in~\cite{MR3717439}, while its precise asymptotics in the limits $s\to 1^-$ and $s\to 0^+$ have been presented in~\cite{MR3707346}. The existence of minimizers in the fractional capillarity model is also addressed in 
\cite{MR3717439}. It is an open problem to understand if these minimizers are regular up to the boundary of the container $\omega$, and thus to confirm the validity of the fractional Young's law in a pointwise sense. In this paper we take two important steps in what is a general and well-established strategy for attacking similar questions in geometric variational problems. 

Our first result (given in Corollary~\ref{HOMCON}) is that blow-up limits of local minimizers subsequentially converge to cones (which, in turn, are also local minimizers). This result relies on a new monotonicity
formula for the fractional capillarity energy (see Theorem~\ref{MONOTONICITY}) and on
an equivalence result with a suitable ``capillarity adaptation'' of the Caffarelli-Silvestre extension problem (given in Proposition~\ref{EXTENSION}).

Our second result (stated in  Theorem~\ref{2CONE})
is a classification theorem for fractional minimizing cones in the half-plane:
more precisely, we will show that the only possible fractional minimizing cones in ambient dimension~$2$ are angular sectors satisfying the fractional version of Young's law.

While the first result about the blow-up limits
(as well as the extension theorem and the monotonicity
formula used in its proof) is valid in any dimension,
the second result about classification of cones
is only proved in dimension~$2$, due to suitable energy
estimates that would not be valid in higher dimensions. 
It is an interesting open problem, which is also open
for interior singularities for arbitrary values of $s\in(0,1)$, to understand if similar rigidity
results for minimizing cones are valid in higher dimensions. The other main open problem  is that of obtaining a boundary regularity criterion comparable
to the one available in the interior \cite{MR2675483}, and analogous to the ones
developed in the classical case to validate Young's law, see \cite{MR3317808,MR3630122} and the references
therein.

\medskip

The precise mathematical setting in which we work is the following.
Given~$s\in(0,1)$ and two disjoint sets~$A$, $B\subseteq\R^n$, we define the {\bf fractional interaction} between $A$ and $B$ as
$$ {\mathcal{I}}_s(A,B):=\iint_{A\times B}\frac{dx\,dy}{|x-y|^{n+s}}.$$
Then, given~$E\subseteq\omega\subseteq\R^n$
and~$\sigma\in(-1,1)$, we define the {\bf fractional
capillarity energy of $E$ in $\omega$} as
\begin{equation}\label{CAP} {\mathcal{C}}_{s,\sigma}(E,\omega):={\mathcal{I}}_s(E,E^c\omega)+\sigma
{\mathcal{I}}_s(E,\omega^c).\end{equation}
Here above and in the rest of this paper, we use the superscript~``$c$''
to denote the complementary set in~$\R^n$. Also, given two sets~$A$, $B\subseteq\R^n$
we use the short notation~$AB:=A\cap B$ (in this way, the notation~$E^c\omega$
is short for~$(\R^n\setminus E)\cap\omega$). Furthermore, the Lebesgue
measure of a set~$F\subseteq\R^n$ will be denoted by~$|F|$.

We consider the half-space
$$H:=\{x=(x_1,\dots,x_n)\in\R^n{\mbox{ s.t. }}x_n>0\},$$
and, given~$R>0$, we denote
by~$B_R\subset\R^n$ the $n$-dimensional Euclidean ball of radius~$R$ centered at the origin.
In this article, we are interested in local minimizers of the fractional capillarity energy in~$H$.
Briefly, we say that~$E\subseteq H$ is a {\bf{local minimizer in~$H$}} if, for every~$R>0$,
we have that~${\mathcal{I}}_s(E B_R, E^c B_R)<+\infty$ and
\begin{equation}\label{MINIMOBLOW}
\begin{split}& {\mathcal{I}}_s(E B_R, E^c H)+{\mathcal{I}}_s(E B_R^c,E^c B_R H)+\sigma
{\mathcal{I}}_s(E B_R,H^c)
\\ \le\;& {\mathcal{I}}_s(F B_R, F^c H)+{\mathcal{I}}_s(F B_R^c,F^c B_R H)+\sigma
{\mathcal{I}}_s(F B_R,H^c),\end{split}\end{equation}
for every~$F\subseteq H$ such that~$F\setminus B_R=E\setminus B_R$.
In particular, blow-up limits of minimizers in the fractional capillarity
problem in bounded domains with smooth boundary are local minimizers in~$H$,
see~\cite[Theorem A.2]{MR3717439}.

In order to exploit extension methods (see e.g.~\cite{MR2354493}), for any~$(x,t)\in\R^{n+1}_+:=
\R^n\times(0,+\infty)$, it is convenient to
introduce the fractional Poisson kernel
$$ \POI_s(x,t) := C_{n,s}\,\frac{t^s}{(|x|^2+t^2)^{\frac{n+s}{2}}},$$
where~$C_{n,s}>0$ is a normalizing constant (which, from now on, will be omitted)
such that
$$ \int_{\R^n} \POI_s(x,t)\,dx=1,\qquad{\mbox{for all }}t>0.$$
Given~$u\in L^\infty(\R^n)$, we also denote the {\bf $s$-extension of~$u$}
by
$$ \EEEE_u(x,t):=\int_{\R^n} u(y)\,\POI_s(x-y,t)\,dy,\qquad{\mbox{ for all }}
(x,t)\in\R^{n+1}_+\,.$$
The relevance of this notion of $s$-extension for our problem lies in the fact that the property of $E$ being a local minimizer in $H$ for the fractional capillarity energy ${\mathcal{C}}_{s,\sigma}$ is equivalent to the property of $\EEEE_u$ being a local minimizer of a Dirichlet-type energy ${\mathcal{F}}_{s,\sigma}$ that we are now going to introduce. Indeed, let~$X=(x,t)\in\R^{n+1}_+$.
As customary, given~$E\subseteq\R^n$, we denote by~$\chi_E:\R^n\to\{0,1\}$
the characteristic function of~$E$.
If~$u=\chi_E$, we also write~$\EEEE_E:=\EEEE_{\chi_E}$.
In addition, given~$\Omega\subseteq\R^{n+1}$ with~$\omega:=\Omega\cap\{t=0\}$
and~$\Omega^+:=\Omega\cap\R^{n+1}_+$,
and~$U:\R^{n+1}\to\R$ with~$u(x):=U(x,0)$, we define the energy
\begin{equation}\label{FSSIG} {\mathcal{F}}_{s,\sigma}(U,\Omega):=\int_{\Omega^+} t^{1-s} |\nabla U(X)|^2
\,dX+
(\sigma-1)\iint_{\omega\times H^c} \frac{u(x)}{|x-y|^{n+s}}\,dx\,dy.
\end{equation}
Given~$K\subseteq \R^{n+1}$ and~$\eta>0$, we also set
\begin{equation}\label{1.3BIS} K_\eta:=\{ X\in\R^{n+1} {\mbox{ s.t. }} {\mathrm{dist}}(X,K)<\eta\}.\end{equation}
Then, we have the following extension result:

\begin{proposition}\label{EXTENSION}
Let~$E\subseteq H$ be such that~${\mathcal{I}}_s(E B_R, E^c B_R)<+\infty$
for every~$R>0$.
The following statements are equivalent:
\begin{enumerate}
\item[(i).] $E$ is a local minimizer in $H$.
\item[(ii).] For all~$R>0$
and all bounded, Lipschitz domains~$\Omega\subset\R^{n+1}$
with
\begin{equation}\label{TRACCIA}
\Omega\cap \{t=0\}= B_R ,\end{equation} we have that
\begin{equation}\label{TRACCIA2} {\mathcal{F}}_{s,\sigma}(\EEEE_E,\Omega)\le {\mathcal{F}}_{s,\sigma}(U,\Omega)\end{equation}
for all~$U:\R^{n+1}_+\to\R$ such that~$U(x,0)=\chi_F(x)$ for all~$x\in\R^n$,
for some~$F\subseteq H$, with~$F B_{R-\eta}^c=EB_{R-\eta}^c$,
and~$U(X)=\EEEE_E(X)$
for all~$X\in (\partial\Omega)_\eta\cap\R^{n+1}_+$, for some~$\eta\in(0,R)$.
\end{enumerate}
\end{proposition}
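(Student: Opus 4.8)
The plan is to use the Caffarelli--Silvestre extension, which characterizes $\EEEE_u$ as the finite-energy minimizer of the weighted Dirichlet energy $U\mapsto\int_{\R^{n+1}_+}t^{1-s}|\nabla U|^2\,dX$ among extensions of $u$ and obeys the bilinear identity
\[
\int_{\R^{n+1}_+} t^{1-s}\,\nabla\EEEE_u\cdot\nabla\Phi\,dX \;=\;\tfrac12\iint_{\R^n\times\R^n}\frac{(u(x)-u(y))\,(\phi(x)-\phi(y))}{|x-y|^{n+s}}\,dx\,dy
\]
whenever $\Phi$ has trace $\phi$ and finite weighted energy (and, by approximating the bounded function $\chi_E$ by its truncations $\chi_{E\cap B_k}$, also when $u=\chi_E$ and $\Phi$ is compactly supported); here and below we use the normalization of $\POI_s$ that makes the constant in this identity equal to $\tfrac12$, so that in particular $\int_{\R^{n+1}_+}t^{1-s}|\nabla\EEEE_{\chi_E}|^2\,dX=\mathcal{I}_s(E,E^c)$ in $[0,+\infty]$. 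Two remarks make everything meaningful although $\EEEE_E$ may carry infinite global energy. First, the standing hypothesis $\mathcal{I}_s(EB_R,E^cB_R)<+\infty$ for all $R$ yields, again by truncation, $\int_{\Omega^+}t^{1-s}|\nabla\EEEE_E|^2\,dX<+\infty$ for every bounded $\Omega$, so $\mathcal{F}_{s,\sigma}(\EEEE_E,\Omega)$ is finite. Second, if $F\subseteq H$ and $F\setminus B_R=E\setminus B_R$, then $g:=\chi_F-\chi_E$ is bounded and supported in $B_R$, the kernel $\big(|\chi_F(x)-\chi_F(y)|-|\chi_E(x)-\chi_E(y)|\big)|x-y|^{-n-s}$ is supported where a variable lies in $B_R$ and is absolutely integrable, and (noting that $\mathcal{I}_s(FB_R,H^c)<+\infty$ for every $F\subseteq H$) a direct bookkeeping --- splitting $E^c=E^cH\cup H^c$ and discarding the interactions that cancel because $E$ and $F$ agree outside $B_R$ --- shows that the difference $\delta_R(F)$ between the two sides of \eqref{MINIMOBLOW} equals
\[
\delta_R(F)=\tfrac12\iint_{\R^n\times\R^n}\frac{|\chi_F(x)-\chi_F(y)|-|\chi_E(x)-\chi_E(y)|}{|x-y|^{n+s}}\,dx\,dy+(\sigma-1)\big(\mathcal{I}_s(FB_R,H^c)-\mathcal{I}_s(EB_R,H^c)\big).
\]
Consequently, (i) is equivalent to ``$\delta_R(F)\ge0$ for all $R>0$ and all $F\subseteq H$ with $F\setminus B_R=E\setminus B_R$'' (one may assume $\mathcal{I}_s(FB_R,F^cB_R)<+\infty$, else \eqref{MINIMOBLOW} is immediate).

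For the implication (i)$\Rightarrow$(ii), let $R,\Omega,U,F,\eta$ be as in (ii); one may assume $\int_{\Omega^+}t^{1-s}|\nabla U|^2<+\infty$. Since $U=\EEEE_E$ on the collar $(\partial\Omega)_\eta\cap\R^{n+1}_+$, the function $\phi:=U-\EEEE_E$, extended by $0$ outside $\Omega^+$, is \emph{compactly supported} in $\overline{\R^{n+1}_+}$, has finite weighted energy, and has trace $g=\chi_F-\chi_E$. Expanding $|\nabla(\EEEE_E+\phi)|^2-|\nabla\EEEE_E|^2$, applying the bilinear identity to the cross term $\int t^{1-s}\nabla\EEEE_E\cdot\nabla\phi$ (legitimate precisely because $\phi$ is compactly supported), bounding $\int t^{1-s}|\nabla\phi|^2\ge\int t^{1-s}|\nabla\EEEE_g|^2=\tfrac12\iint\frac{|g(x)-g(y)|^2}{|x-y|^{n+s}}$ by minimality of the extension, and completing the square (using $\chi_E+g=\chi_F$ and $|\chi_\bullet(x)-\chi_\bullet(y)|^2=|\chi_\bullet(x)-\chi_\bullet(y)|$), one gets
\[
\int_{\Omega^+}t^{1-s}\big(|\nabla U|^2-|\nabla\EEEE_E|^2\big)\,dX\;\ge\;\tfrac12\iint_{\R^n\times\R^n}\frac{|\chi_F(x)-\chi_F(y)|-|\chi_E(x)-\chi_E(y)|}{|x-y|^{n+s}}\,dx\,dy.
\]
Adding $(\sigma-1)\big(\mathcal{I}_s(FB_R,H^c)-\mathcal{I}_s(EB_R,H^c)\big)$ to both sides and recalling the formula for $\delta_R(F)$ gives $\mathcal{F}_{s,\sigma}(U,\Omega)-\mathcal{F}_{s,\sigma}(\EEEE_E,\Omega)\ge\delta_R(F)\ge0$, i.e. \eqref{TRACCIA2}.

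For the implication (ii)$\Rightarrow$(i), fix $R>0$ and $F\subseteq H$ with $F\setminus B_R=E\setminus B_R$; one may assume $g:=\chi_F-\chi_E\in\dot H^{s/2}(\R^n)$. Fix a small $\eta\in(0,R)$ and, for $\rho>R$ large, take $\Omega_\rho:=\{X\in\R^{n+1}:|X|<\rho\}$ (so $\Omega_\rho\cap\{t=0\}=B_\rho$) together with a cut-off $\psi_\rho\in C_c^\infty(\R^{n+1})$ with $0\le\psi_\rho\le1$, $\psi_\rho\equiv1$ on $\{|X|\le R+1\}$, $\psi_\rho\equiv0$ on $\{|X|\ge\rho-2\eta\}$, $|\nabla\psi_\rho|\lesssim\rho^{-1}$, and $\psi_\rho\uparrow1$ pointwise. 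Then $U_\rho:=\EEEE_E+\psi_\rho\,\EEEE_g$, with $\EEEE_g:=\EEEE_F-\EEEE_E$, is admissible in (ii) with $B_\rho$ in place of $B_R$: its trace is $\chi_E+\psi_\rho g=\chi_F$ (as $\psi_\rho=1$ where $g\ne0$) and $U_\rho=\EEEE_E$ on $(\partial\Omega_\rho)_\eta\cap\R^{n+1}_+$. Since $\psi_\rho\EEEE_g$ is compactly supported with trace $g$, the term of $\int_{\Omega_\rho^+}t^{1-s}(|\nabla U_\rho|^2-|\nabla\EEEE_E|^2)$ linear in $\psi_\rho\EEEE_g$ equals $\iint\frac{(\chi_E(x)-\chi_E(y))(g(x)-g(y))}{|x-y|^{n+s}}$, \emph{independently of $\rho$}, while the quadratic one satisfies $\int t^{1-s}|\nabla(\psi_\rho\EEEE_g)|^2\to\int t^{1-s}|\nabla\EEEE_g|^2=\tfrac12\iint\frac{|g(x)-g(y)|^2}{|x-y|^{n+s}}$, the cut-off error $\int t^{1-s}\EEEE_g^2\,|\nabla\psi_\rho|^2$ vanishing because $|\EEEE_g(X)|\lesssim|X|^{-n}$ for $|X|$ large. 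Completing the square as above,
\[
\limsup_{\rho\to\infty}\int_{\Omega_\rho^+}t^{1-s}\big(|\nabla U_\rho|^2-|\nabla\EEEE_E|^2\big)\,dX\;\le\;\tfrac12\iint_{\R^n\times\R^n}\frac{|\chi_F(x)-\chi_F(y)|-|\chi_E(x)-\chi_E(y)|}{|x-y|^{n+s}}\,dx\,dy,
\]
and since $\mathcal{I}_s(FB_\rho,H^c)-\mathcal{I}_s(EB_\rho,H^c)=\mathcal{I}_s(FB_R,H^c)-\mathcal{I}_s(EB_R,H^c)$ for $\rho\ge R$, applying \eqref{TRACCIA2} to each pair $(\Omega_\rho,U_\rho)$ and letting $\rho\to\infty$ yields $0\le\delta_R(F)$, i.e. \eqref{MINIMOBLOW}.

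The real difficulty is not the (routine) completing-of-the-square, but coping with the fact that $\EEEE_E$ and $\chi_E$ carry infinite energy: every relevant quantity must be read as one of the finite differences that survive because $E$ and $F$ agree outside $B_R$, and the extension identities must be justified in this non-finite-energy regime. This is exactly why the two devices above matter --- extending the competitor by $\EEEE_E$, so that $\phi=U-\EEEE_E$ is genuinely compactly supported and can be paired against the weighted-harmonic function $\EEEE_E$ (i.e. $\mathrm{div}(t^{1-s}\nabla\EEEE_E)=0$) by integration by parts; and, conversely, choosing the cut-offs so that $\psi_\rho g=g$, which freezes the only $\rho$-dependent singular term, together with the decay $|\EEEE_g(X)|\lesssim|X|^{-n}$, which kills the cut-off error. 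The remaining points to check carefully are the bookkeeping identity for $\delta_R(F)$ and the normalization of $\POI_s$, so that --- as in \eqref{FSSIG} --- no dimensional constant is left in front of either term of $\mathcal{F}_{s,\sigma}$.
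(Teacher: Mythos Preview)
Your argument is correct and, at its core, uses the same mechanism as the paper --- the minimality of the Caffarelli--Silvestre extension among extensions with the same trace --- but you organize the proof differently in two respects.

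First, the paper invokes Lemma~7.2 of \cite{MR2675483} as a black box (formula~\eqref{PIV}), whereas you essentially re-prove it inline: expanding $|\nabla(\EEEE_E+\phi)|^2-|\nabla\EEEE_E|^2$, using weighted harmonicity of $\EEEE_E$ for the cross term and global minimality of $\EEEE_g$ for the quadratic term, then completing the square via $|\chi_\bullet(x)-\chi_\bullet(y)|^2=|\chi_\bullet(x)-\chi_\bullet(y)|$. This is more self-contained and makes explicit why the identity survives when $\chi_E\notin\dot H^{s/2}$ (all relevant integrals live on sets where one variable lies in $B_R$). Second, the bookkeeping is streamlined: you work directly with the global perimeter difference $\tfrac12\iint(|\chi_F|-|\chi_E|)/|x-y|^{n+s}$ and a single quantity $\delta_R(F)$, whereas the paper routes through the localized quantities ${\mathrm{Per}}_s(\cdot,B_R)$, ${\mathrm{Per}}_s(\cdot,B_RH)$, ${\mathrm{Per}}_{s,\sigma}(\cdot,B_R)$ and the identity~\eqref{CONTAZZO}. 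For (ii)$\Rightarrow$(i) you build explicit cut-off competitors $U_\rho=\EEEE_E+\psi_\rho\EEEE_g$ and pass to the limit using the decay $|\EEEE_g(X)|\lesssim|X|^{-n}$, while the paper picks $\delta$-almost minimizers of the infimum in~\eqref{PIV}; both achieve the same bound, and your route has the minor advantage that the limit is actually an equality (you only need $\le$). The paper's approach, on the other hand, keeps all quantities manifestly localized in $B_{R+1}$ throughout, which matches the way ${\mathrm{Per}}_{s,\sigma}$ is used elsewhere (Lemma~\ref{EQUILEM}, Lemma~\ref{LAER-01}).
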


The previous result can be seen as the natural counterpart,
in the setting of fractional capillarity problems.
of several extension theorems for the fractional Laplacian,
for fractional minimal surfaces and, more generally, for nonlocal free
boundary problems, see e.g. in~\cite{MR104296, MR0247668, MR2354493, MR2675483, MR2754080, MR3712006}.
Among the many applications of the powerful tool provided
by extension results, is the possibility of obtaining convenient monotonicity
formulae: actually, to the best of our knowledge,
all the monotonicity formulae
involving nonlocal operators rely on identifying appropriate local extension problems methods.

In the setting considered in this paper, we will exploit
Proposition~\ref{EXTENSION} to obtain a monotonicity formula that
we now describe in detail.
We denote by~${\mathcal{B}}_R\subset\R^{n+1}$ the $(n+1)$-dimensional Euclidean ball of radius~$R$.
For~$E\subseteq\omega$ and~$r>0$, we define
\begin{equation*} \Phi_E (r):= {r^{s-n}}
{\mathcal{F}}_{s,\sigma}(\EEEE_E,{\mathcal{B}}_r).\end{equation*}
We observe that the above function is scale invariant, in the sense that
\begin{equation}\label{SCALE}
\Phi_E (r)= \Phi_{E_{r/\rho}} (\rho),
\end{equation}
where
\begin{equation}\label{BUS}
E_r:=\frac{ E }{r}=\left\{ \frac{x}{r},\;\, x\in E\right\}.\end{equation}
In this setting, we have the following monotonicity formula:

\begin{theorem}\label{MONOTONICITY}
Assume that~$E\subseteq H$ is a local minimizer for the fractional capillarity energy
in~$H$.
Then, the function~$(0,+\infty)\ni r\mapsto \Phi_E(r)$ is monotone nondecreasing.

More precisely, for every~$r>0$ we have that
\begin{equation}\label{MONI}
\Phi_E'(r)\ge r^{s-n}
\int_{ (\partial{\mathcal{B}}_{r})\cap\{t>0\}}
t^{1-s} |\nabla_\nu \EEEE_{ E }(X)|^2\,d{\mathcal{H}}^n_{X}.
\end{equation}
Furthermore, we have that~$\Phi_E$ is constant if and only if~$E$ is a cone,
i.e.~$\tau E=E$ for all~$\tau>0$.
\end{theorem}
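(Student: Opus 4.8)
The plan is to prove the monotonicity formula in Theorem~\ref{MONOTONICITY} by the standard ``competitor by homogeneous extension'' technique adapted to the degenerate-elliptic capillarity setting. First I would reduce everything to the weighted Dirichlet energy $\mathcal{F}_{s,\sigma}(\EEEE_E,\mathcal{B}_r)$, using Proposition~\ref{EXTENSION} to know that $\EEEE_E$ is a local minimizer of $\mathcal{F}_{s,\sigma}$. The key observation is that $\mathcal{F}_{s,\sigma}$ is homogeneous of degree $n-s$ under the scaling $X\mapsto X/r$, which is exactly why $\Phi_E(r)=r^{s-n}\mathcal{F}_{s,\sigma}(\EEEE_E,\mathcal{B}_r)$ is the right scale-invariant quantity; I would record this together with the scaling identity \eqref{SCALE}. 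Then, writing $V(X):=\EEEE_E(rX/|X|)$ for $|X|\le r$ (the $0$-homogeneous extension of the boundary trace of $\EEEE_E$ on $\partial\mathcal{B}_r$), I would compare $\mathcal{F}_{s,\sigma}(\EEEE_E,\mathcal{B}_r)$ against $\mathcal{F}_{s,\sigma}(V,\mathcal{B}_r)$. The admissibility of $V$ as a competitor needs a small check: on $\{t=0\}$ its trace is $\chi_F$ for the cone $F$ over $EB_r\cap\partial B_r$ restricted appropriately, it agrees with $\EEEE_E$ near $\partial\mathcal{B}_r$, and it coincides with $E$ outside — this is where the precise form of condition (ii) in Proposition~\ref{EXTENSION} is used, so I would phrase things on $\mathcal{B}_r$ minus a thin shell and then pass to the limit.

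The heart of the computation is the explicit evaluation of the weighted Dirichlet energy of the $0$-homogeneous extension. In spherical coordinates $X=\rho\theta$, one has $\nabla V = \rho^{-1}\nabla_{\mathbb{S}^n}(\text{trace})$, and the weight $t^{1-s}$ becomes $\rho^{1-s}(\theta_{n+1})^{1-s}$ where $\theta_{n+1}=t/\rho$; the $\rho$-integration of $\rho^{1-s}\cdot\rho^{-2}\cdot\rho^n\,d\rho$ over $(0,r)$ produces a factor $\frac{r^{n-s}}{n-s}$ times the spherical (tangential) energy of the boundary trace. The nonlocal term scales with the same homogeneity $n-s$. Collecting terms one gets, roughly,
\begin{equation*}
\mathcal{F}_{s,\sigma}(V,\mathcal{B}_r)=\frac{r}{n-s}\Big(\int_{(\partial\mathcal{B}_r)\cap\{t>0\}} t^{1-s}|\nabla_{\mathrm{tan}}\EEEE_E|^2\,d\mathcal{H}^n + (\text{boundary nonlocal term})\Big),
\end{equation*}
and since $V$ is only a competitor one has $\mathcal{F}_{s,\sigma}(\EEEE_E,\mathcal{B}_r)\le\mathcal{F}_{s,\sigma}(V,\mathcal{B}_r)$. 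On the other hand, differentiating $\Phi_E$ and using the coarea/divergence structure one computes
\begin{equation*}
\Phi_E'(r)=(s-n)r^{s-n-1}\mathcal{F}_{s,\sigma}(\EEEE_E,\mathcal{B}_r)+r^{s-n}\frac{d}{dr}\mathcal{F}_{s,\sigma}(\EEEE_E,\mathcal{B}_r),
\end{equation*}
where $\frac{d}{dr}\mathcal{F}_{s,\sigma}(\EEEE_E,\mathcal{B}_r)$ equals the surface integral of $t^{1-s}|\nabla\EEEE_E|^2$ over $(\partial\mathcal{B}_r)\cap\{t>0\}$ plus the derivative of the nonlocal term, the latter being handled by a change to polar coordinates in $\R^n$. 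Splitting $|\nabla\EEEE_E|^2=|\nabla_\nu\EEEE_E|^2+|\nabla_{\mathrm{tan}}\EEEE_E|^2$ and feeding in the minimality inequality (which bounds $\mathcal{F}_{s,\sigma}(\EEEE_E,\mathcal{B}_r)$ in terms of the tangential energy) yields exactly \eqref{MONI}, with the radial-derivative term $r^{s-n}\int t^{1-s}|\nabla_\nu\EEEE_E|^2$ surviving on the right-hand side and the tangential contributions cancelling against the $(s-n)$-term.

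For the rigidity statement, $\Phi_E$ constant forces $\Phi_E'(r)=0$ for a.e.\ $r$, hence by \eqref{MONI} $\nabla_\nu\EEEE_E=0$ on almost every sphere $\partial\mathcal{B}_r\cap\{t>0\}$; moreover equality must hold in the minimality comparison, so $\EEEE_E$ must coincide with its own $0$-homogeneous extension on each ball, i.e.\ $\EEEE_E$ is radially constant, $\EEEE_E(X)=\EEEE_E(X/|X|)$. Since $\EEEE_E$ is the $s$-Poisson extension of $\chi_E$ and the Poisson kernel $\POI_s$ satisfies the scaling $\POI_s(\lambda x,\lambda t)=\lambda^{-n}\POI_s(x,t)$, the $0$-homogeneity of $\EEEE_E$ is equivalent to $\EEEE_{E_\tau}=\EEEE_E$ for all $\tau>0$, and by injectivity of the extension (it recovers $u$ as $t\to0^+$ in $L^1_{\mathrm{loc}}$, and a characteristic function is determined a.e.\ by its Poisson extension) this gives $\chi_{E_\tau}=\chi_E$, i.e.\ $\tau E=E$: $E$ is a cone. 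The converse is immediate from \eqref{SCALE}: if $E$ is a cone then $E_{r/\rho}=E$, so $\Phi_E(r)=\Phi_E(\rho)$ for all $r,\rho$.

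The main obstacle I anticipate is the careful justification of the competitor $V$ near $\partial\mathcal{B}_r$ and on the flat boundary $\{t=0\}$: one needs the $0$-homogeneous extension to have finite weighted Dirichlet energy (the weight $t^{1-s}$ with $1-s\in(0,1)$ is a Muckenhoupt $A_2$ weight, which is what makes the trace theory and the finiteness work, but the tangential gradient of $\EEEE_E$ on the sphere must be controlled, typically by interior estimates for the degenerate equation $\mathrm{div}(t^{1-s}\nabla\EEEE_E)=0$ away from $\{t=0\}$), and one must match boundary data exactly, which forces working on $\mathcal{B}_r\setminus\mathcal{B}_{r-\eta}$ and sending $\eta\to0^+$ while checking that the nonlocal term on $\{t=0\}$ behaves continuously. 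The differentiation of the nonlocal term $(\sigma-1)\iint_{B_r\times H^c}\frac{\chi_E(x)}{|x-y|^{n+s}}\,dx\,dy$ in $r$ and its matching homogeneity is a second technical point, but it is a direct polar-coordinates computation once one notes that this term is $(n-s)$-homogeneous in the same way as the Dirichlet part.
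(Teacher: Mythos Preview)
Your plan is correct and is essentially the same strategy as the paper's, just packaged differently. Both proofs hinge on comparing $\EEEE_E$ against a $0$-homogeneous competitor and on the same tangential/normal splitting of $|\nabla\EEEE_E|^2$ on $\partial\mathcal{B}_r$. The paper, however, does not differentiate $\Phi_E$ directly and then insert the inequality $\mathcal{F}_{s,\sigma}(\EEEE_E,\mathcal{B}_r)\le\mathcal{F}_{s,\sigma}(V,\mathcal{B}_r)$ for a pure $0$-homogeneous $V$. Instead it builds a single competitor $U_\epsilon$ equal to $\EEEE_E(X/(1-\epsilon))$ in $\mathcal{B}_{1-\epsilon}^+$, to the $0$-homogeneous function $\EEEE_E(X/|X|)$ in the shell $\mathcal{B}_1^+\setminus\mathcal{B}_{1-\epsilon}^+$, and to $\EEEE_E$ outside $\mathcal{B}_1$; applies Proposition~\ref{EXTENSION} once on $\mathcal{B}_{1+\eta}$; and expands $0\le\mathcal{F}_{s,\sigma}(U_\epsilon,\mathcal{B}_{1+\eta})-\mathcal{F}_{s,\sigma}(\EEEE_E,\mathcal{B}_{1+\eta})$ to first order in $\epsilon$. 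The first-order term is precisely $\Phi_E'(1)-\int_{(\partial\mathcal{B}_1)\cap\{t>0\}}t^{1-s}|\nabla_\nu\EEEE_E|^2$, and one then scales via~\eqref{SCALE} to reach~\eqref{MONI} at general $r$.

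What this buys is that your ``main obstacle'' evaporates: $U_\epsilon$ coincides with $\EEEE_E$ on a full open neighborhood of $\partial\mathcal{B}_1$, so the competitor hypothesis in Proposition~\ref{EXTENSION}(ii) is met without any limiting argument in $\eta$; and the nonlocal term is handled by the same $\epsilon$-expansion, because the inner piece has trace $\chi_{(1-\epsilon)E}$ rather than a cone, so the contribution reduces to the difference $J(1)-J(1-\epsilon)$ of the already correctly scaled quantity $J(r)=r^{s-n}\mathcal{I}_s(B_rE,H^c)$, with no need to compute the cone's nonlocal energy explicitly. Your direct route also works (the polar-coordinate computation you sketch for the $(\sigma-1)$-term does close: the cone's interaction with $H^c$ equals $\frac{r^{n-s}}{n-s}$ times the spherical slice, and this cancels exactly against the $(s-n)$-factor), but the paper's $\epsilon$-shell construction is cleaner on both technical points you flagged. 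The rigidity statement is argued the same way in both proofs.
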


As a consequence of Theorem~\ref{MONOTONICITY}, we have that suitable blow-up
limits of local minimizers of the fractional capillarity problem are cones:

\begin{corollary}\label{HOMCON}
Let~$\omega\subset\R^n$ be a bounded open set with~$C^1$-boundary.
Let~$E\subseteq\omega$ be a minimizer
of the capillarity functional in~\eqref{CAP} among sets of prescribed volume contained in $\omega$.

Assume that~$0\in\overline{\omega\cap(\partial E)}$. Then for every vanishing sequence $r_j$ there exists (a not relabeled subsequence and) a set~$E_0\subset\R^n$, such that, in the notation of~\eqref{BUS},
we have that~$\chi_{E_{r_j} }\to\chi_{E_0}$ in~$L^1_{\mathrm loc}(\R^n)$.
In addition, $E_0$ is a cone.
\end{corollary}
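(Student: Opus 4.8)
The plan is to blow $E$ up at the origin, identify the limit with a local minimizer in~$H$, show that the monotone quantity~$\Phi$ associated with that limit is constant, and then invoke the rigidity case of Theorem~\ref{MONOTONICITY}. After a translation and a rotation we put the blow-up center at the origin, and, in the boundary case~$0\in\partial\omega$, we arrange that the inner unit normal of~$\omega$ at the origin is~$e_n$, so that the~$C^1$ regularity of~$\partial\omega$ forces the rescaled containers to converge to~$H$ in~$L^1_{\mathrm{loc}}(\R^n)$ (the interior case~$0\in\omega$ is the analogous one for the fractional perimeter, for which the rigidity of minimizing cones is already available, see~\cite{MR2675483}; so we concentrate on the boundary case). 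Standard density and fractional-perimeter estimates for (volume-constrained) minimizers make the family~$\{\chi_{E_{r_j}}\}$ precompact in~$L^1_{\mathrm{loc}}(\R^n)$, so that along a not relabeled subsequence~$\chi_{E_{r_j}}\to\chi_{E_0}$ with~$E_0\subseteq H$; by~\cite[Theorem A.2]{MR3717439}, $E_0$ is a local minimizer in~$H$, and hence, by Theorem~\ref{MONOTONICITY}, $\Phi_{E_0}$ is monotone nondecreasing. In particular the limit~$\Phi_{E_0}(0^+):=\lim_{\rho\to0^+}\Phi_{E_0}(\rho)$ exists, and it is finite because the Dirichlet part of~$\mathcal{F}_{s,\sigma}$ is nonnegative while its nonlocal part, after the~$\rho^{s-n}$ rescaling, is bounded below by a constant.

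The core of the argument is to upgrade this to the statement that~$\Phi_{E_0}$ is \emph{constant} on~$(0,+\infty)$, and for this I would pass through the finite scales of~$E$. A minimizer among sets of prescribed volume is a~$\Lambda$-minimizer of the capillarity energy --- restore the prescribed volume by relocating a negligible amount of mass in a fixed region far from the origin --- and the~$\Lambda$-contribution enters the monotonicity computation only through an~$O(\rho^{s})$ error after the~$\rho^{s-n}$ rescaling; combined with the fact that, by the~$C^1$ regularity of~$\partial\omega$, the container is indistinguishable from~$H$ at small scales up to a vanishing error, this produces an almost-monotonicity formula for the localized capillarity energy~$\Phi_E$ of~$E$ near the origin (here~$\Phi_E$ is read with the container~$\omega$ in place of~$H$, the two coinciding in the blow-up limit), so that~$\ell:=\lim_{r\to0^+}\Phi_E(r)$ exists. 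By the scale invariance~\eqref{SCALE} one has~$\Phi_E(r_j\rho)=\Phi_{E_{r_j\rho}}(1)$ for every~$\rho>0$, and this quantity tends to~$\ell$ as~$j\to\infty$ (since~$r_j\rho\to0$); if in addition we know that the extension energies converge along the blow-up, i.e.~$\Phi_{E_{r_j\rho}}(1)\to\Phi_{(E_0)_\rho}(1)=\Phi_{E_0}(\rho)$, then~$\Phi_{E_0}(\rho)=\ell$ for every~$\rho>0$, which is exactly the desired constancy. The convergence of the extension energies is where the real work lies: lower semicontinuity of the weighted Dirichlet integral~$\int_{\mathcal{B}_\rho\cap\R^{n+1}_+}t^{1-s}|\nabla U|^2\,dX$ (the nonlocal term passing to the limit by dominated convergence, since~$0\le\chi_{E_{r_j}}\le1$) gives one inequality, while the reverse inequality~$\limsup_j\Phi_{E_{r_j}}(\rho)\le\Phi_{E_0}(\rho)$ I would get by a gluing/competitor argument --- test the almost-minimality of~$\EEEE_{E_{r_j}}$ on~$\mathcal{B}_\rho$ furnished by Proposition~\ref{EXTENSION} against the function that equals~$\EEEE_{E_0}$ on~$\mathcal{B}_{\rho-\delta}$, equals~$\EEEE_{E_{r_j}}$ near~$\partial\mathcal{B}_\rho$, and interpolates in the annulus~$\mathcal{B}_\rho\setminus\mathcal{B}_{\rho-\delta}$, using the local uniform convergence~$\EEEE_{E_{r_j}}\to\EEEE_{E_0}$ in~$\{t>0\}$ to keep the interpolation cost under control, and then let~$\delta\to0$.

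Once~$\Phi_{E_0}$ is known to be constant, the rigidity part of Theorem~\ref{MONOTONICITY} (which applies because~$E_0$ is a local minimizer in~$H$) gives immediately that~$E_0$ is a cone, completing the proof. I expect the main obstacle to be the passage from the mere~$L^1_{\mathrm{loc}}$ convergence~$\chi_{E_{r_j}}\to\chi_{E_0}$ to the convergence of the extension energies~$\Phi_{E_{r_j}}(\rho)\to\Phi_{E_0}(\rho)$ --- which is essentially the substance of~\cite[Theorem A.2]{MR3717439} --- together with the bookkeeping required to make the volume constraint and the (curved rather than flat)~$C^1$ container harmless in the monotonicity formula at vanishing scales.
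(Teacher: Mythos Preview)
Your strategy is sound and would work, but the paper takes a shortcut that sidesteps precisely the two places you flag as requiring the most work. Rather than a single blow-up plus an almost-monotonicity formula for the curved, volume-constrained problem, the paper performs a \emph{double blow-up}: first rescale $E$ to get a limit $E^\star$ which, by \cite[Theorem~A.2]{MR3717439}, is already an unconstrained local minimizer in the exact half-space $H$; then blow $E^\star$ up once more to obtain $E_0$. The point is that Theorem~\ref{MONOTONICITY} and the energy-convergence Lemma~\ref{LAER-01} are stated and proved only for local minimizers in $H$, so they apply cleanly to $E^\star$ and its rescalings with no almost-monotonicity and no curved-container bookkeeping. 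One then gets $\Phi_{E_0}(\rho)=\lim_{h}\Phi_{E^\star_{1/h}}(\rho)=\lim_{h}\Phi_{E^\star}(\rho/h)=\Phi_{E^\star}(0^+)$ for every $\rho>0$, hence $\Phi_{E_0}$ is constant and $E_0$ is a cone; a short diagonal argument then exhibits $E_0$ as a subsequential limit of rescalings of $E$ itself. Your route is more direct but would require extending both Theorem~\ref{MONOTONICITY} (to an almost-monotonicity statement for $\Lambda$-minimizers in $C^1$ domains) and Lemma~\ref{LAER-01} (to cover the curved-to-flat transition of containers), which is feasible but noticeably heavier than what the paper actually does.
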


The existence of the minimizers in Corollary~\ref{HOMCON}
(and, in fact, of a more general class of minimizers)
is warranted by Proposition~1.1 in~\cite{MR3717439}.
As a matter of fact, Corollary~\ref{HOMCON} is also valid
for the ``almost minimizers'', as introduced in
Definition~1.5 of~\cite{MR3717439}, with the same proof that we present here.
\medskip

In the setting of Corollary~\ref{HOMCON}, it is natural to consider
locally minimizing cones in~$H$ (i.e., sets that are locally minimizing in~$H$
and that possess a conical structure).
Interestingly, in dimension~2, we can completely characterize
locally minimizing cones in~$H$, according to the following result:

\begin{theorem}\label{2CONE}
Let~$n=2$ and~$E$ be a locally minimizing cone in~$H=\{x_2>0\}$.
Then, $E$
is made of only one component and, up to a rigid motion, we have that
$$ E=\{ x=(x_1,x_2)\in H {\mbox{ s.t. }} x_1> x_2\cos\vartheta\},$$
with~$\vartheta\in(0,\pi)$ implicitly defined by the formula
\begin{equation}\label{ANGO}\begin{split}&
1+\sigma=\frac{(\sin\vartheta)^s\,\;M(\vartheta,s)}{M(\pi/2,s)},\\
{\mbox{where }}\;&M(\vartheta,s):=
2\iint_{(0,\vartheta)\times(0,+\infty)}\frac{r}{(r^2+2r\cos t+1)^{\frac{2+s}2}}\,dt\,dr.
\end{split}
\end{equation}
\end{theorem}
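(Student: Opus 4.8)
The plan is to reduce the classification to a one–dimensional free–boundary problem for the angular profile of the cone, to solve that problem in three moves — connectedness of the profile, attachment to the wall, and determination of the opening — and to recognize the resulting optimality condition as \eqref{ANGO}.

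\emph{Reduction to the angular profile.} In dimension $2$ a cone $E\subseteq H$ is determined by the set of directions $A:=\{\phi\in(0,\pi):(\cos\phi,\sin\phi)\in E\}$, so that $E=\{\rho(\cos\phi,\sin\phi):\rho>0,\ \phi\in A\}$; since $E$ is a local minimizer we may assume, passing to a representative, that $A$ is a finite union of open arcs (finiteness of the energy plus density/clean–ball estimates), and we discard the degenerate cases $A=\emptyset$ and $A=(0,\pi)$, i.e.\ $E=\emptyset$ or $E=H$, which carry no free boundary and do not occur as blow–up limits at a contact point. Because $\chi_E$ — hence, by definition, $\EEEE_E$ — is $0$–homogeneous, the scale invariance \eqref{SCALE} makes $\Phi_E$ constant and shows that local minimality of $E$ is equivalent to the single inequality in \eqref{MINIMOBLOW} with $R=1$. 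The delicate point is to turn this into a variational principle for $A$: two distinct cones never agree outside a ball, so one compares $E$ with competitors equal to $\mathrm{cone}(A')$ inside $B_1$ and to $E$ outside $B_1$; on such competitors every term of \eqref{MINIMOBLOW} is a homogeneous integral, which one evaluates by integrating out the radial variable. The outcome is that $A$ minimizes a renormalized energy $\mathsf J(A)$ combining a one–dimensional nonlocal–perimeter term on the circle of directions (the liquid–air interface), a wall–attraction term of sign $\sigma$ (the interaction of the liquid with $\{x_2<0\}$), and a convergent near–far coupling term; the elementary integrals that appear are, up to the radial factor, exactly those assembled in $M(\cdot,s)$, since $M(\vartheta,s)=2\int_{\pi-\vartheta}^{\pi}\mathsf k(\tau)\,d\tau$ with $\mathsf k(\tau):=\int_0^{\infty}r\,(1+r^2-2r\cos\tau)^{-(2+s)/2}\,dr$.

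\emph{The profile is a single arc attached to the wall.} The feature of local minimizers in $H$ that makes the classification possible is the absence of a volume constraint: both ``filling in an air gap lying between two liquid arcs'' and ``sliding a liquid arc towards an endpoint of $(0,\pi)$'' are admissible modifications. First I would show that, if $A$ is not essentially connected, then filling in one of its air gaps strictly decreases $\mathsf J$: the number of free–boundary rays interior to $H$ drops by two, which strictly lowers the liquid–air term — this is where a Riesz–type rearrangement on the circle of directions, or an explicit comparison of angular kernel integrals, enters — while the wall and coupling terms vary by a controlled amount dominated by this gain. Next, for $A$ a single arc $(a,b)\subseteq(0,\pi)$, a similar comparison forces $a=0$ or $b=\pi$ (the two cases being exchanged by the reflection $x_1\mapsto-x_1$, which preserves $H$), so that, up to a rigid motion, $E$ is the sector $\{0<\arg x<w\}$ for some $w\in(0,\pi)$: sliding the arc to an endpoint removes one of its two free–boundary rays, and the resulting interface saving cannot be offset by the change in the $\sigma$–dependent wall term.

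\emph{The opening angle.} For an anchored sector $E_w=\{0<\arg x<w\}$ the minimality of $E$ forces the first–variation identity $\tfrac{d}{dw}\mathsf J(E_w)=0$ at the relevant value of $w$. Computing this derivative — which requires the usual principal–value cancellation between the liquid–side and air–side contributions to the interface term — and then passing to the variable $\vartheta$ related to the geometric opening by $\cot w=\cos\vartheta$ (the change of variables straightening the free–boundary ray while tilting the wall contributes the Jacobian factor $(\sin\vartheta)^s$) turns the identity into exactly \eqref{ANGO}, with $M(\pi/2,s)$ appearing as the normalization fixed by the neutral case $\sigma=0$, i.e.\ the right–angle sector. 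Uniqueness of the critical point — hence the fact that the Young sector is the only non–degenerate locally minimizing cone — follows from strict monotonicity in $w$ of $\tfrac{d}{dw}\mathsf J(E_w)$.

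\emph{Main obstacle.} The hard part is the second move: showing that the liquid–air interface energy, together with the $\sigma$–dependent wall term and the near–far coupling term, strictly decreases under the merging and sliding modifications. In $n=2$ these estimates close because, away from its vertex, the free boundary of a cone is a straight ray and all the angular integrals are explicit and monotone; in higher dimensions the free boundary is itself a nontrivial lower–dimensional cone, the coupling term is no longer controlled, and the rearrangement step fails — this is precisely the reason the classification is restricted to the plane, as anticipated in the introduction. A second, more technical, obstacle is the bookkeeping in the reduction step, namely controlling the near–far cross–terms produced by gluing a competitor cone inside $B_1$ to $E$ outside, together with the principal–value analysis underlying the first–variation computation.
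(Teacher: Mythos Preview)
Your approach is genuinely different from the paper's, and the gap lies exactly where you place your ``main obstacle'': the merging and sliding comparisons are asserted, not proved, and they are not routine. For the filling step, removing two interior rays does not by itself lower the fractional liquid--air term: the filled region now interacts with all of the remaining air, and for the nonlocal kernel this new contribution is of the same order as the two interactions you removed; there is no Riesz rearrangement on an arc of $(0,\pi)$ that delivers the inequality you need, and you give no substitute. The sliding step is worse, because the sign of the net variation depends on $\sigma$: for $\sigma$ near $1$ moving liquid toward the wall increases the wall term, and you offer no quantitative bound showing the interface saving dominates uniformly in $\sigma\in(-1,1)$. A further technical hole is your competitor construction: gluing $\mathrm{cone}(A')$ inside $B_1$ to $E$ outside produces a jump along $\partial B_1$ whose cost is exactly the ``near--far coupling'' you mention, and you never show this cost is finite, let alone that it cancels to give a clean one-dimensional functional $\mathsf J(A)$. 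Finally, the change of variables $\cot w=\cos\vartheta$ and the appearance of the factor $(\sin\vartheta)^s$ are left unexplained; this is not how \eqref{ANGO} arises in \cite{MR3717439}.

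The paper avoids all of this by working in the extension. It takes a large-scale cut-off translation $Y(X)=X+\Psi(X/R)e_1$, sets $U^\pm_R(Y)=U(X(Y))$ with $U=\EEEE_E$, and proves the second-order estimate
\[
\big|{\mathcal F}_{s,\sigma}(U^+_R,{\mathcal B}_R)+{\mathcal F}_{s,\sigma}(U^-_R,{\mathcal B}_R)-2{\mathcal F}_{s,\sigma}(U,{\mathcal B}_R)\big|\le C R^{-s}.
\]
Minimality against $U^-_R$ then forces ${\mathcal F}_{s,\sigma}(U^+_R,{\mathcal B}_R)-{\mathcal F}_{s,\sigma}(U,{\mathcal B}_R)\le C R^{-s}$. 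If $U$ were not monotone in $e_1$, one builds $W_R=\max\{U,U^+_R\}$, replaces it in a fixed ball ${\mathcal B}_M$ by the $t^{1-s}$-harmonic function with the same boundary data, and uses the Strong Maximum Principle to extract a fixed gain $\delta_0>0$ independent of $R$; sending $R\to\infty$ gives a contradiction. Monotonicity of $\chi_E$ in $e_1$ immediately forces the cone to be a single sector attached to $\partial H$, and \eqref{ANGO} is then quoted from \cite{MR3717439}. The point is that the translation method replaces your delicate sign analysis of angular integrals by a soft maximum-principle argument in the extended variables; this is precisely why the paper introduces Proposition~\ref{EXTENSION}.
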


Notice that~\eqref{ANGO} expresses the fractional Young's law mentioned earlier in this introduction, which, in the limit as $s\to 1^-$ converges to the contact angle prescription given
by the classical Young's Law. For a detailed asymptotic description
of this feature, see~\cite{MR3707346}.

To prove Theorem~\ref{2CONE},
we utilize a ``translation method'' that was
introduced in~\cite{MR3090533} to prove
the regularity of fractional minimizing surfaces in the plane.
In our context, however, the cone is going to have a singularity
at the origin, hence the notion of ``regularity'' has to be weaken
to a suitable notion of ``monotonicity'', taking inspiration
by some work in~\cite{MR3035063}.

The rest of this paper is devoted to the proof of the results
that we have presented above. More specifically, Section~\ref{CAPS}
contains some preliminary observations relating the nonlocal surface
tension energy introduced in~\cite{MR3717439}
and the nonlocal perimeter functional introduced in~\cite{MR2675483}. Then,
the proof of Proposition~\ref{EXTENSION} will be given in Section~\ref{SEXTA}
and the one of
Theorem~\ref{MONOTONICITY} in Section~\ref{B09j}.
Section~\ref{COPROOF} contains the proof of
Corollary~\ref{HOMCON} and
Section~\ref{coniin2} the one of
Theorem~\ref{2CONE}.

\section{Capillarity and fractional perimeters}\label{CAPS}

In this section, we point out some useful relations
between the capillarity functional in~\eqref{CAP} and
other fractional energies of geometric type.
First of all,
we observe that the energy functional in~\eqref{CAP} can be related to
the fractional perimeter introduced in~\cite{MR2675483}.
Indeed,
writing, for any given~$F$, $\omega\subseteq\R^n$,
$$ {\mathrm{Per}}_s(F,\omega):={\mathcal{I}}_s(F\omega,F^c\omega)+
{\mathcal{I}}_s(F\omega,F^c\omega^c)+{\mathcal{I}}_s(F\omega^c,F^c\omega),$$
for every~$E\subseteq\omega$ we have that
\begin{equation*} {\mathcal{C}}_{s,\sigma}(E,\omega)={\mathrm{Per}}_s(E,\omega)+
(\sigma-1){\mathcal{I}}_s(E,\omega^c).\end{equation*}
It is also useful to define, for all~$F\subseteq H$ and all~$R>0$,
\begin{equation}\label{PERSSIG}
{\mathrm{Per}}_{s,\sigma}(F,B_R):=
{\mathrm{Per}}_s(F,B_RH) +(\sigma-1){\mathcal{I}}_s(F B_R,H^c)
.\end{equation}
In this setting, we can state the local minimizer condition in~\eqref{MINIMOBLOW}
in terms of the fractional perimeter as follows:

\begin{lemma}\label{EQUILEM}
A set~$E\subseteq H$ is a locally minimizer in~$H$ if and only if, for every~$R>0$,
we have that~${\mathrm{Per}}_s(E,B_RH) <+\infty$ and
$$ {\mathrm{Per}}_{s,\sigma}(E,B_R)
\le {\mathrm{Per}}_{s,\sigma}(F,B_R)
$$
for every~$F\subseteq H$ such that~$F\setminus B_R=E\setminus B_R$.
\end{lemma}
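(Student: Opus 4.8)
The plan is to reduce the two minimality conditions to each other by examining how the three energy quantities differ. The key observation is that the definition of a local minimizer in~$H$, given in~\eqref{MINIMOBLOW}, involves the three terms ${\mathcal{I}}_s(E B_R, E^c H)$, ${\mathcal{I}}_s(E B_R^c, E^c B_R H)$, and $\sigma\,{\mathcal{I}}_s(E B_R, H^c)$, while ${\mathrm{Per}}_{s,\sigma}(E,B_R)$ as defined in~\eqref{PERSSIG} unfolds, via the definition of ${\mathrm{Per}}_s(E,B_R H)$ with $\omega = B_R H$, into ${\mathcal{I}}_s(E B_R H, E^c B_R H) + {\mathcal{I}}_s(E B_R H, (E^c B_R H)^c) + {\mathcal{I}}_s((E B_R H)^c, E^c B_R H) + (\sigma-1){\mathcal{I}}_s(E B_R, H^c)$. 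Since $E\subseteq H$, we have $E B_R H = E B_R$, which simplifies several terms. So the first step is to carefully expand both expressions, using $E\subseteq H$ to drop the redundant intersections with~$H$ where possible.

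Next I would split each interaction according to whether the relevant points lie inside or outside $B_R$, so as to match the two forms. For instance ${\mathcal{I}}_s(E B_R, E^c H)$ splits as ${\mathcal{I}}_s(E B_R, E^c B_R H) + {\mathcal{I}}_s(E B_R, E^c B_R^c H)$, and similarly one decomposes the perimeter terms. The crucial point is that, under the constraint $F\setminus B_R = E\setminus B_R$, the ``far-field'' interactions — those between two points both outside $B_R$ — are identical for $E$ and for~$F$, and hence cancel when one compares $E$ against a competitor. Therefore only the terms involving at least one point in $B_R$ matter, and one checks that, modulo such invariant (competitor-independent) quantities, the left-hand side of~\eqref{MINIMOBLOW} and ${\mathrm{Per}}_{s,\sigma}(E,B_R)$ differ by the same constant. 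The finiteness conditions are likewise equivalent: ${\mathcal{I}}_s(E B_R, E^c B_R) < +\infty$ for all $R$ iff ${\mathrm{Per}}_s(E, B_R H) < +\infty$ for all $R$, since the extra interaction terms in ${\mathrm{Per}}_s$ are controlled (enlarging $R$ slightly, the interaction of $E B_R$ with the exterior is dominated by the interaction of $E B_{2R}$ with its complement inside $B_{2R}$, which is finite by hypothesis).

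The main obstacle, and the step requiring the most care, is the bookkeeping of which interaction terms are genuinely competitor-independent. One must verify that, after all cancellations, the remaining $F$-dependent part of~\eqref{MINIMOBLOW} coincides exactly — not merely up to sign or up to a further $F$-dependent error — with the $F$-dependent part of ${\mathrm{Per}}_{s,\sigma}(F,B_R)$. This requires noting that a term like ${\mathcal{I}}_s(F B_R^c, F^c B_R H)$ appearing in~\eqref{MINIMOBLOW} matches ${\mathcal{I}}_s((E B_R H)^c, E^c B_R H)$ up to contributions of the form ${\mathcal{I}}_s(\,\cdot\, B_R, F^c B_R H)$ with the first argument being a set that does not change with~$F$ off $B_R$; here one uses $F^c B_R = E^c B_R$ wherever possible but must be attentive to the fact that $F^c$ and $E^c$ do differ inside $B_R$. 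Once the decomposition is organized so that every $F$-dependent contribution is visibly common to both functionals, the equivalence of the two inequalities — in both directions — follows immediately, and the finiteness assertions follow from the observation above. I would then close by recording explicitly that ${\mathcal{C}}_{s,\sigma}(E,\omega) = {\mathrm{Per}}_s(E,\omega) + (\sigma-1){\mathcal{I}}_s(E,\omega^c)$ already shown is the global-domain shadow of this localized identity.
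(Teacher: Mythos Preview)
Your strategy is sound and would succeed, but it is more roundabout than the paper's argument. The paper does not show that the two functionals differ by a competitor-independent constant and then cancel; it shows that for every $F\subseteq H$ they are \emph{identically equal}. Expanding $\mathrm{Per}_s(F,B_RH)$ and using $F\subseteq H$ (so that $FB_RH=FB_R$, $F(B_RH)^c=FB_R^c$, and $F^cH^c=H^c$) yields directly
\[
\mathrm{Per}_{s,\sigma}(F,B_R)=\mathcal{I}_s(FB_R,F^cH)+\mathcal{I}_s(FB_R^c,F^cB_RH)+\sigma\,\mathcal{I}_s(FB_R,H^c),
\]
which is exactly the quantity appearing in~\eqref{MINIMOBLOW}. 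No cancellation of ``far-field'' terms and no separate bookkeeping of $F$-dependent versus $F$-independent contributions is needed; the term $\mathcal{I}_s(FB_R^c,F^cB_RH)$ that worries you is literally $\mathcal{I}_s(F(B_RH)^c,F^cB_RH)$ once $F(B_RH)^c=FB_R^c$ is used. What your approach buys is robustness (it would still work if the two expressions differed by a genuine nonzero constant), at the cost of extra care you do not actually need here.

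Two small slips to fix in your write-up: you write $(EB_RH)^c$ where the perimeter definition gives $F(B_RH)^c$ (the complement applies only to the domain $B_RH$, not to the product), and you propose using ``$F^cB_R=E^cB_R$ wherever possible'', which is false---only $FB_R^c=EB_R^c$ and $F^cB_R^c=E^cB_R^c$ follow from the competitor constraint. Your finiteness argument via enlarging $R$ is correct.
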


\begin{proof} If~$F\subseteq H$,
\begin{eqnarray*}&&
{\mathrm{Per}}_s(F,B_RH) +(\sigma-1){\mathcal{I}}_s(F B_R,H^c)\\&=&
{\mathcal{I}}_s(F B_RH,F^c B_RH)+
{\mathcal{I}}_s(F B_RH,F^cB_R^cH)+
{\mathcal{I}}_s(F B_RH,F^c H^c)\\&&\quad
+{\mathcal{I}}_s(FB_R^cH,F^c B_RH)+{\mathcal{I}}_s(FH^c,F^c B_RH)
+(\sigma-1){\mathcal{I}}_s(F B_R,H^c)\\
&=&
{\mathcal{I}}_s(F B_R,F^c B_RH)+
{\mathcal{I}}_s(F B_R,F^cB_R^cH)+
{\mathcal{I}}_s(F B_R,H^c)\\&&\quad
+{\mathcal{I}}_s(FB_R^c,F^c B_RH)
+(\sigma-1){\mathcal{I}}_s(F B_R,H^c)\\
&=&
{\mathcal{I}}_s(F B_R,F^c H)
+{\mathcal{I}}_s(FB_R^c,F^c B_RH)
+\sigma {\mathcal{I}}_s(F B_R,H^c).
\end{eqnarray*}
{F}rom this, \eqref{MINIMOBLOW}
and~\eqref{PERSSIG}, the desired result plainly follows.\end{proof}

\section{Extension problems and proof of Proposition~\ref{EXTENSION}}\label{SEXTA}

In this section, we analyze the equivalent extension problem stated in Proposition~\ref{EXTENSION}
and give a proof of it.

\begin{proof}[Proof of Proposition~\ref{EXTENSION}]
First of all, we observe that, by~\eqref{FSSIG} and~\eqref{PERSSIG},
if~$V:\R^{n+1}_+\to\R$ is such that~$V(x,0)=\chi_L(x)$, with~$L\subseteq H$,
and~$\Omega\subset\R^{n+1}$ satisfies~\eqref{TRACCIA},
\begin{equation}\label{CRSTUS}
\begin{split}&
{\mathrm{Per}}_{s,\sigma}(L,B_R)-{\mathcal{F}}_{s,\sigma}(V,\Omega)\\=\;&
{\mathrm{Per}}_s(L,B_RH) +(\sigma-1){\mathcal{I}}_s(L B_R,H^c)\\&\quad
-\int_{\Omega^+} t^{1-s} |\nabla V(X)|^2
\,dX-(\sigma-1)\iint_{B_R\times H^c} \frac{\chi_L(x)}{|x-y|^{n+s}}\,dx\,dy
\\=\;&
{\mathrm{Per}}_s(L,B_RH)-
\int_{\Omega^+} t^{1-s} |\nabla V(X)|^2
\,dX.\end{split}\end{equation}
We also remark that, if~$F\subseteq H$, then
\begin{equation}\label{CONTAZZO}
\begin{split}
&{\mathrm{Per}}_s(F,B_R)-{\mathrm{Per}}_s(F, B_RH)\\=\;&
{\mathcal{I}}_s(FB_R, F^cB_R)+{\mathcal{I}}_s(FB_R, F^cB_R^c)+{\mathcal{I}}_s(FB_R^c, F^cB_R)\\
&\quad-
{\mathcal{I}}_s(FB_RH, F^cB_RH)-{\mathcal{I}}_s(FB_RH, F^c(B_RH)^c)-{\mathcal{I}}_s(F(B_RH)^c, F^cB_RH)
\\=\;&
{\mathcal{I}}_s(FB_R, F^cB_R)+{\mathcal{I}}_s(FB_R, F^cB_R^c)+{\mathcal{I}}_s(FB_R^c, F^cB_R)\\
&\quad-
{\mathcal{I}}_s(FB_R, F^cB_RH)-{\mathcal{I}}_s(FB_R, F^c(B_R^cH\cup H^c))-{\mathcal{I}}_s(F(B_R^cH\cup H^c), F^cB_RH)
\\=\;&
{\mathcal{I}}_s(FB_R, F^cB_RH)+
{\mathcal{I}}_s(FB_R, B_RH^c)\\
&\quad+
{\mathcal{I}}_s(FB_R, F^cB_R^cH)+
{\mathcal{I}}_s(FB_R, B_R^cH^c)
+{\mathcal{I}}_s(FB_R^c, F^cB_RH)
+{\mathcal{I}}_s(FB_R^c, B_RH^c)
\\
&\quad-
{\mathcal{I}}_s(FB_R, F^cB_RH)-{\mathcal{I}}_s(FB_R, F^c B_R^cH)
-{\mathcal{I}}_s(FB_R, H^c)
-{\mathcal{I}}_s(F B_R^c, F^cB_RH)
\\=\;&
{\mathcal{I}}_s(FB_R, B_RH^c)+
{\mathcal{I}}_s(FB_R, B_R^cH^c)
+{\mathcal{I}}_s(FB_R^c, B_RH^c)
-{\mathcal{I}}_s(FB_R, H^c)\\=\;&{\mathcal{I}}_s(FB_R^c, B_RH^c).
\end{split}\end{equation}
We will also exploit Lemma~7.2 of~\cite{MR2675483}, according to which (up to normalizing constants
that we omit), given~$L$, $M$, $\omega\subseteq\R^n$ with~${\mathrm{Per}}_s(L,\omega)$, ${\mathrm{Per}}_s(M,\omega)<+\infty$
and~$L\tilde\omega^c=M\tilde\omega^c$, for~$\tilde\omega\Subset\omega$,
then
\begin{equation}\label{PIV}
\inf \int_{\Omega^+} t^{1-s} \big( |\nabla V(X)|^2-|\nabla\EEEE_M(X)|^2\big)
\,dX={\mathrm{Per}}_s(L,\omega)-{\mathrm{Per}}_s(M,\omega),
\end{equation}
where the infimum is taken among all bounded Lipschitz domains~$\Omega\subseteq\R^{n+1}$
with~$ \Omega\cap\{t=0\}\Subset \omega$ and
among all functions~$V:\R^{n+1}_+\to\R$
such that~$V-\EEEE_M$ is compactly supported in~$\Omega$,
and~$V(x,0)=\chi_L(x)$.

Now, assume that~$E$ is a local minimizer in~$H$, and let~$R$, $\Omega$, $\eta$,
$U$ and~$F$ be as in
the assumptions of Proposition~\ref{EXTENSION}(ii).
In the notation of~\eqref{1.3BIS}, we consider the set
$$ \tilde\Omega:=\left\{ X\in \Omega {\mbox{ s.t. }} {\mathrm{dist}}(X,\partial\Omega)\ge\frac\eta2\right\}
=\Omega\setminus(\partial\Omega)_{\eta/2}.
$$
By the assumptions of Proposition~\ref{EXTENSION}(ii), we know that~$U-\EEEE_E$ is compactly
supported
in~$ \tilde\Omega$. Moreover~$\tilde\Omega\cap\{t=0\}\Subset \Omega\cap\{t=0\}=B_R$.
Therefore, we can exploit~\eqref{PIV} with~$\Omega$ there replaced by~$\tilde\Omega$
and~$\omega$ chosen to be~$B_{R}$, thus obtaining
\begin{eqnarray*}&&  \int_{\Omega^+} t^{1-s} \big( |\nabla U(X)|^2-|\nabla\EEEE_E(X)|^2\big)
\,dX=
\int_{\tilde\Omega^+} t^{1-s} \big( |\nabla U(X)|^2-|\nabla\EEEE_E(X)|^2\big)
\,dX\\&&\qquad\qquad\ge {\mathrm{Per}}_s(F,B_{R})-{\mathrm{Per}}_s(E,B_{R}).\end{eqnarray*}
This and~\eqref{CRSTUS} give that
\begin{eqnarray*}
&&{\mathcal{F}}_{s,\sigma}(\EEEE_E,\Omega)-{\mathcal{F}}_{s,\sigma}(U,\Omega)
\\&=&
{\mathrm{Per}}_{s,\sigma}(E,B_R)-{\mathrm{Per}}_s(E,B_RH)+
\int_{\Omega^+} t^{1-s} |\nabla \EEEE_E(X)|^2\,dX\\&&\quad
-{\mathrm{Per}}_{s,\sigma}(F,B_R)+{\mathrm{Per}}_s(F,B_RH)-
\int_{\Omega^+} t^{1-s} |\nabla U(X)|^2\,dX
\\ &\le& {\mathrm{Per}}_{s,\sigma}(E,B_R)- {\mathrm{Per}}_{s,\sigma}(F,B_R)
\\ &&\qquad+
{\mathrm{Per}}_s(F,B_RH) -{\mathrm{Per}}_s(E,B_RH)
-{\mathrm{Per}}_s(F,B_R)+{\mathrm{Per}}_s(E,B_R).
\end{eqnarray*}
Consequently, recalling~\eqref{CONTAZZO} and the fact that~$E$ and~$F$ coincide
outside~$B_R$,
\begin{eqnarray*}
&&{\mathcal{F}}_{s,\sigma}(\EEEE_E,\Omega)-{\mathcal{F}}_{s,\sigma}(U,\Omega)\\
&\le& {\mathrm{Per}}_{s,\sigma}(E,B_R)- {\mathrm{Per}}_{s,\sigma}(F,B_R)
-{\mathcal{I}}_s(FB_R^c, B_RH^c)+{\mathcal{I}}_s(EB_R^c, B_RH^c)\\
&=&{\mathrm{Per}}_{s,\sigma}(E,B_R)- {\mathrm{Per}}_{s,\sigma}(F,B_R).
\end{eqnarray*}
The locally minimizing property of~$E$ and Lemma~\ref{EQUILEM} thereby
imply that~${\mathcal{F}}_{s,\sigma}(\EEEE_E,\Omega)-{\mathcal{F}}_{s,\sigma}(U,\Omega)\le0$,
that is~\eqref{TRACCIA2}, as desired.

Let us now suppose that, viceversa, the claim in~\eqref{TRACCIA2} holds true.
Our objective is now to check that~$E$ is a local minimizer. To this end,
let~$F\subseteq H$ such that~$F\setminus B_R=E\setminus B_R$.
Also, fixed~$\delta>0$, recalling~\eqref{PIV},
we take a bounded Lipschitz domain~$\Omega^{(\delta)}\subseteq\R^{n+1}$
with~$ \Omega^{(\delta)}\cap\{t=0\}\Subset B_{R+1}$ and
a function~${V^{(\delta)}}:\R^{n+1}_+\to\R$
such that~${V^{(\delta)}}-\EEEE_E$ is compactly supported in~$\Omega^{(\delta)}$,
and~${V^{(\delta)}}(x,0)=\chi_F(x)$, with~$\Omega^{(\delta)}$
and~${V^{(\delta)}}$ attaining the infimum in~\eqref{PIV} with~$\omega:=B_{R+1}$
up to an error~$\delta$, that is
\begin{equation}\label{PiUYA} \int_{(\Omega^{(\delta)})^+} t^{1-s} \big( |\nabla {V^{(\delta)}}(X)|^2-|\nabla\EEEE_E(X)|^2\big)
\,dX-\delta
\le {\mathrm{Per}}_s(F,B_{R+1})-{\mathrm{Per}}_s(E,B_{R+1}).
\end{equation}
Let
$$ \rho':=\sup_{x\in\Omega^{(\delta)}\cap\{t=0\}} |x|\qquad{\mbox{ and }}\qquad
\rho:=\max\{ R,\rho'\}.$$
By construction, we have that~$\rho'\in [0,R+1)$, and thus~$\rho\in[R,R+1)$.
Let also~$\Omega^{(\delta,\rho)}:=\Omega^{(\delta)}\cup {\mathcal{B}}_\rho$.
Then, we have that
\begin{equation}\label{GIUS}
\Omega^{(\delta,\rho)}\cap\{t=0\}=B_\rho.\end{equation}
Furthermore, since~${V^{(\delta)}}=\EEEE_E$ in~$\Omega^{(\delta,\rho)}\setminus
\Omega^{(\delta)}$, we have that
\begin{eqnarray*}&& \int_{(\Omega^{(\delta,\rho)})^+} t^{1-s} \big( |\nabla {V^{(\delta)}}(X)|^2-|\nabla\EEEE_E(X)|^2\big)
\,dX\\&=&
\int_{(\Omega^{(\delta)})^+} t^{1-s} \big( |\nabla {V^{(\delta)}}(X)|^2-|\nabla\EEEE_E(X)|^2\big)
\,dX.\end{eqnarray*}
Therefore, recalling~\eqref{PiUYA},
\begin{equation} \label{FSSIGx6}
\int_{(\Omega^{(\delta,\rho)})^+} t^{1-s} \big( |\nabla {V^{(\delta)}}(X)|^2-|\nabla\EEEE_E(X)|^2\big)
\,dX-\delta
\le {\mathrm{Per}}_s(F,B_{R+1})-{\mathrm{Per}}_s(E,B_{R+1}).
\end{equation}
Moreover, in view of~\eqref{GIUS}, we are in the position of
using~\eqref{TRACCIA2} (with~$\Omega$ replaced by~$\Omega^{(\delta,\rho)}$
and~$R$ replaced by~$\rho$). In this way, we find that
$$ {\mathcal{F}}_{s,\sigma}(\EEEE_E,\Omega^{(\delta,\rho)})\le {\mathcal{F}}_{s,\sigma}(V^{(\delta)},\Omega^{(\delta,\rho)}).$$
Consequently, exploiting~\eqref{FSSIG}, \eqref{GIUS} and~\eqref{FSSIGx6},
\begin{eqnarray*}&&
{\mathrm{Per}}_s(E,B_{R+1})-{\mathrm{Per}}_s(F,B_{R+1})\\&\le&
\int_{(\Omega^{(\delta,\rho)})^+} t^{1-s} \big(|\nabla\EEEE_E(X)|^2- |\nabla {V^{(\delta)}}(X)|^2\big)
\,dX+\delta\\&=&
{\mathcal{F}}_{s,\sigma}(\EEEE_E,\Omega^{(\delta,\rho)})
-{\mathcal{F}}_{s,\sigma}({V^{(\delta)}},\Omega^{(\delta,\rho)})\\&&\quad
-(\sigma-1)\iint_{B_\rho\times H^c} \frac{\chi_E(x)}{|x-y|^{n+s}}\,dx\,dy
+(\sigma-1)\iint_{B_\rho\times H^c} \frac{\chi_F(x)}{|x-y|^{n+s}}\,dx\,dy+\delta\\
&\le&
-(\sigma-1)\left( \iint_{B_R\times H^c} \frac{\chi_E(x)}{|x-y|^{n+s}}\,dx\,dy
-\iint_{B_R\times H^c} \frac{\chi_F(x)}{|x-y|^{n+s}}\,dx\,dy\right)+\delta
\\&=&-
(\sigma-1)\,\Big(
{\mathcal{I}}_s(EB_R,H^c)-{\mathcal{I}}_s(FB_R,H^c)\Big)+\delta.\end{eqnarray*}
Hence, since
\begin{eqnarray*}
&&\big({\mathrm{Per}}_s(E,B_{R+1})-{\mathrm{Per}}_s(F,B_{R+1})\big)-
\big( {\mathrm{Per}}_s(E,B_R)-{\mathrm{Per}}_s(F,B_R)\big)\\
&=& {\mathcal{I}}_s(E B_{R+1}B_R^c,E^cB_{R+1}^c)+
{\mathcal{I}}_s(E B_R^c,E^cB_{R+1}B_R^c)\\&&\qquad-
{\mathcal{I}}_s(F B_{R+1}B_R^c,F^cB_{R+1}^c)-
{\mathcal{I}}_s(F B_R^c,F^cB_{R+1}B_R^c)\\&=&0,
\end{eqnarray*}
we find that
$$ {\mathrm{Per}}_s(E,B_{R})-{\mathrm{Per}}_s(F,B_{R})\le -
(\sigma-1)\,\Big(
{\mathcal{I}}_s(EB_R,H^c)-{\mathcal{I}}_s(FB_R,H^c)\Big)+\delta.$$
Then, by~\eqref{PERSSIG} and~\eqref{CONTAZZO},
\begin{eqnarray*}
&&{\mathrm{Per}}_{s,\sigma}(E,B_R)-{\mathrm{Per}}_{s,\sigma}(F,B_R)\\
&=& {\mathrm{Per}}_s(E,B_R H)-{\mathrm{Per}}_s(F,B_R H)+(\sigma-1)\,\Big(
{\mathcal{I}}_s(EB_R,H^c)-{\mathcal{I}}_s(FB_R,H^c)\Big)
\\&\le&\delta+{\mathrm{Per}}_s(E,B_R H)
-{\mathrm{Per}}_s(E,B_R )+{\mathrm{Per}}_s(F,B_R )
-{\mathrm{Per}}_s(F,B_R H)\\&=&\delta-
{\mathcal{I}}_s(EB_R^c, B_RH^c)+
{\mathcal{I}}_s(FB_R^c, B_RH^c)\\&=&\delta.
\end{eqnarray*}
Sending~$\delta\searrow0$, we thereby conclude that~$ {\mathrm{Per}}_{s,\sigma}(E,B_R)\le{\mathrm{Per}}_{s,\sigma}(F,B_R)$.
This, combined with Lemma~\ref{EQUILEM}, gives that~$E$
is a locally minimizer, as desired.
\end{proof}

\section{Monotonicity formula and proof of Theorem~\ref{MONOTONICITY}}\label{B09j}

Goal of this section is proving Theorem~\ref{MONOTONICITY}.

\begin{proof}[Proof of Theorem~\ref{MONOTONICITY}]
Let
$$ C_E:= \left\{ x\in\R^n\setminus\{0\} {\mbox{ s.t. }} \frac{x}{|x|}\in E \right\}.$$
Given~$\e>0$, we define
$$ E^{(\e)}:= \left(  \big((1-\e)E\big)\cap B_{1-\e}\right)\cup \left( C_E
\cap\big(B_1\setminus B_{1-\e}\big)\right)\cup\left( E\setminus B_1
\right),$$
see
\begin{figure}
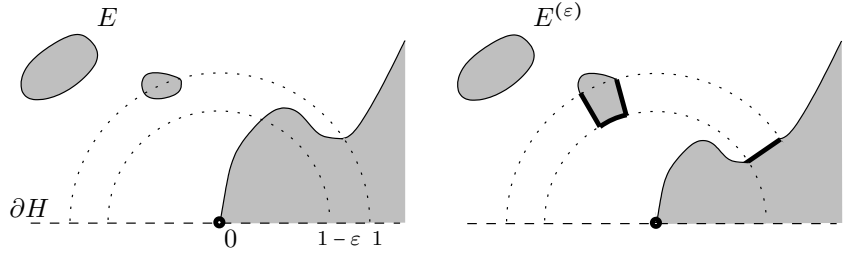\caption{{\small The construction used in the proof of Theorem~\ref{MONOTONICITY}. The parts of the boundary of $E^{(\varepsilon)}$ due to $C_E
\cap\big(B_1\setminus B_{1-\e}\big)$ are depicted by bold lines.}}\label{fig gatto}
\end{figure}
Figure \ref{fig gatto}, and
$$ U_\e(X):=
\begin{cases}
\EEEE_E \left( \frac{X}{1-\e}\right) & {\mbox{ if }} X\in {\mathcal{B}}_{1-\e}^+,\\
\EEEE_E \left( \frac{X}{|X|}\right) & {\mbox{ if }} X\in {\mathcal{B}}_{1}^+\setminus{\mathcal{B}}_{1-\e}^+,\\
\EEEE_E (X) & {\mbox{ if }} X\in \R^{n+1}_+\setminus
{\mathcal{B}}_1.
\end{cases}$$
We remark that
\begin{eqnarray*} U_\e(x,0)&=&
\begin{cases}
\chi_E \left( \frac{x}{1-\e}\right) & {\mbox{ if }} x\in B_{1-\e},\\
\chi_E \left( \frac{x}{|x|}\right) & {\mbox{ if }} x\in B_1\setminus B_{1-\e},\\
\chi_E (x) & {\mbox{ if }} x\in \R^{n}\setminus
B_1,
\end{cases}
\\&=&\chi_{E^{(\e)}}(x)
.\end{eqnarray*}
We also claim that
\begin{equation}\label{PROP:SI1}
E^{(\e)}\subseteq H.
\end{equation}
Indeed, let~$x\in E^{(\e)}$. If~$x\in B_{1-\e}$,
we have that~$x\in(1-\e)E$, and thus~$\frac{x}{1-\e}\in E$.
Since~$E\subseteq H$, we deduce that~$\frac{x_n}{1-\e}\ge0$,
and consequently~$x_n\ge0$, which gives that~$x\in H$ in this case.

If instead~$x\in B_1\setminus B_{1-\e}$, we have that~$x\in C_E$,
and hence~$\frac{x}{|x|}\in E$. In this case, since~$E\subseteq H$,
we find that~$\frac{x_n}{|x|}\ge0$, and again~$x\in H$.
Finally, if~$x\in B_1^c$, we have that~$x\in E\subseteq H$, which completes the proof of~\eqref{PROP:SI1}.

We also observe that~$U_\e=
\EEEE_E$ outside~${\mathcal{B}}_1$.
Then, in view of~\eqref{PROP:SI1}, we can fix~$\eta>0$
and exploit Proposition~\ref{EXTENSION}
with~$\Omega:={\mathcal{B}}_{1+\eta}$, $R:=1+\eta$, $U:=U_\e$ and~$F:=E^{(\e)}$.
In this way, we conclude that
\begin{equation}\label{TERA:1}
\begin{split}
0\,&\le
{\mathcal{F}}_{s,\sigma}(U_\e,{\mathcal{B}}_{1+\eta})
-{\mathcal{F}}_{s,\sigma}(\EEEE_E,{\mathcal{B}}_{1+\eta})\\&=
\int_{{\mathcal{B}}_{1+\eta}^+} t^{1-s} \Big(|\nabla U_\e(X)|^2
- |\nabla \EEEE_E(X)|^2\Big)\,dX\\&\quad+
(\sigma-1)\,\left(\iint_{B_{1+\eta}\times H^c} \frac{\chi_{E^{(\e)}}(x)}{|x-y|^{n+s}}\,dx\,dy
-\iint_{B_{1+\eta}\times H^c} \frac{\chi_{ E }(x)}{|x-y|^{n+s}}\,dx\,dy\right)
\\ &=
\int_{{\mathcal{B}}_{1}^+} t^{1-s} \Big(|\nabla U_\e(X)|^2
- |\nabla \EEEE_E(X)|^2\Big)\,dX\\&\quad+
(\sigma-1)\,\Big(
{\mathcal{I}}_s(B_{1}E^{(\e)}, H^c)-
{\mathcal{I}}_s(B_{1}E, H^c)\Big).
\end{split}\end{equation}
We set
$$ G(r):=r^{s-n}\int_{{\mathcal{B}}_{r}^+} t^{1-s} |\nabla \EEEE_E(X)|^2\,dX,$$
and, using the change of variable~$Y=(y,\theta):=\frac{X}{1-\e}$, we observe that
\begin{eqnarray*}&&
\int_{{\mathcal{B}}_{1}^+} t^{1-s} |\nabla U_\e(X)|^2\,dX
\\&=&\frac{1}{(1-\e)^2}
\int_{{\mathcal{B}}_{1-\e}^+} t^{1-s} \left|
\nabla \EEEE_E\left(\frac{X}{1-\e}\right)\right|^2\,dX\\&&\qquad+
\int_{{\mathcal{B}}_{1}^+\setminus{\mathcal{B}}_{1-\e}^+}
\frac{t^{1-s}}{|X|^2}\left(\left|\nabla \EEEE_E\left(
\frac{X}{|X|}\right)\right|^2 -
\left|\frac{X}{|X|}\cdot \nabla \EEEE_E\left(
\frac{X}{|X|}\right)\right|^2\right)\,dX
\\&=& (1-\e)^{n-s}
\int_{{\mathcal{B}}_{1}^+} \theta^{1-s} |\nabla \EEEE_E(Y)|^2\,dY\\&&\qquad+\e
\int_{ (\partial{\mathcal{B}}_{1})\cap\{t>0\}}
t^{1-s} \left( |\nabla \EEEE_E(X)|^2 -
\left|X\cdot \nabla \EEEE_E(X)\right|^2\right)\,d{\mathcal{H}}^n_X+o(\e)\\
&=& (1-\e)^{n-s}G(1)+\e
\int_{ (\partial{\mathcal{B}}_{1})\cap\{t>0\}}
t^{1-s} |\nabla_\tau \EEEE_E(X)|^2\,d{\mathcal{H}}^n_X+o(\e),
\end{eqnarray*}
where~$\nabla_\tau$ denotes the tangential gradient along~$\partial{\mathcal{B}}_{1}$.

Similarly,
\begin{eqnarray*}&&
\int_{{\mathcal{B}}_{1}^+} t^{1-s} |\nabla \EEEE_E(X)|^2\,dX\\&=&
\int_{{\mathcal{B}}_{1-\e}^+} t^{1-s} |\nabla \EEEE_E(X)|^2\,dX+\e
\int_{(\partial{\mathcal{B}}_{1})\cap\{t>0\}} t^{1-s} |\nabla \EEEE_E(X)|^2\,d{\mathcal{H}}^n_X+o(\e)\\
&=& (1-\e)^{n-s}G(1-\e)
+\e
\int_{(\partial{\mathcal{B}}_{1})\cap\{t>0\}} t^{1-s} |\nabla \EEEE_E(X)|^2\,d{\mathcal{H}}^n_X+o(\e),
\end{eqnarray*}
and accordingly
\begin{equation}\label{09oKA-2}
\begin{split}
&\int_{{\mathcal{B}}_{1}^+} t^{1-s} \Big(|\nabla U_\e(X)|^2
- |\nabla \EEEE_E(X)|^2\Big)\,dX
\\ =\;&
(1-\e)^{n-s}G(1)-(1-\e)^{n-s}G(1-\e)\\&\quad
+\e\left(
\int_{ (\partial{\mathcal{B}}_{1})\cap\{t>0\}}
t^{1-s} |\nabla_\tau \EEEE_E(X)|^2\,d{\mathcal{H}}^n_X
-\int_{(\partial{\mathcal{B}}_{1})\cap\{t>0\}} t^{1-s} |\nabla \EEEE_E(X)|^2\,d{\mathcal{H}}^n_X\right)+o(\e)\\
=\;&
\left( 1-(n-s)\e\right)\,\left(G(1)-G(1-\e)\right)
-\e
\int_{ (\partial{\mathcal{B}}_{1})\cap\{t>0\}}
t^{1-s} |\nabla_\nu \EEEE_E(X)|^2\,d{\mathcal{H}}^n_X
+o(\e),\end{split}
\end{equation}
where~$\nabla_\nu$ denotes the (exterior)
normal gradient along~$\partial{\mathcal{B}}_{1}$.

Furthermore, setting
$$ J(r):=r^{s-n} {\mathcal{I}}_s(B_{r}E, H^c) ,$$
using the substitutions~$\bar x:=\frac{x}{1-\e}$ and~$\bar y:=\frac{y}{1-\e}$,
and noticing that~$C_E\cap(\partial B_1)=E\cap(\partial B_1)$,
we have that
\begin{eqnarray*}
&& {\mathcal{I}}_s(B_{1}E^{(\e)}, H^c)-
{\mathcal{I}}_s(B_{1}E, H^c)\\
&=& {\mathcal{I}}_s\big(B_{1-\e}\big((1-\e)E\big), H^c\big)-
{\mathcal{I}}_s(B_{1-\e}E, H^c)+
{\mathcal{I}}_s(B_{1}B_{1-\e}^cC_E, H^c)-
{\mathcal{I}}_s(B_{1}B_{1-\e}^cE, H^c)\\
&=& \iint_{ B_{1-\e} ((1-\e)E)\times H^c}\frac{dx\,dy}{|x-y|^{n+s}}
-(1-\e)^{n-s}J(1-\e)\\&&\quad+\e\left(
\iint_{(\partial B_1)\times H^c} \frac{\chi_{C_E}(x)\,d{\mathcal{H}}^{n-1}_x\,dy}{|x-y|^{n+s}}-
\iint_{(\partial B_1)\times H^c} \frac{\chi_{ E }(x)\,d{\mathcal{H}}^{n-1}_x\,dy}{|x-y|^{n+s}}\right)+o(\e)
\\ &=& (1-\e)^{n-s}\iint_{ B_{1 } E\times  H^c}\frac{d\bar x\,d\bar y}{|\bar x-\bar y|^{n+s}}
-(1-\e)^{n-s}J(1-\e)+o(\e)\\&=&
(1-\e)^{n-s} \big( J(1)-J(1-\e)\big)+o(\e)\\
&=& \left( 1-(n-s)\e\right)\,\left(J(1)-J(1-\e)\right)+o(\e).
\end{eqnarray*}
Then, plugging this information and~\eqref{09oKA-2} into~\eqref{TERA:1},
and noticing that~$\Phi_E(r)=G(r)+(\sigma-1)J(r)$, we conclude that
\begin{eqnarray*}
0&\le& \left( 1-(n-s)\e\right)\,\left(G(1)-G(1-\e)\right)
-\e
\int_{ (\partial{\mathcal{B}}_{1})\cap\{t>0\}}
t^{1-s} |\nabla_\nu \EEEE_E(X)|^2\,d{\mathcal{H}}^n_X
\\&&\qquad
+(\sigma-1)\,\left( 1-(n-s)\e\right)\,\left(J(1)-J(1-\e)\right)+o(\e)\\&=&
\left( 1-(n-s)\e\right)\,\left(\Phi_E(1)-\Phi_E(1-\e)\right)
-\e
\int_{ (\partial{\mathcal{B}}_{1})\cap\{t>0\}}
t^{1-s} |\nabla_\nu \EEEE_E(X)|^2\,d{\mathcal{H}}^n_X+o(\e)\\
&=& \e\,\Phi_E'(1)
-\e
\int_{ (\partial{\mathcal{B}}_{1})\cap\{t>0\}}
t^{1-s} |\nabla_\nu \EEEE_E(X)|^2\,d{\mathcal{H}}^n_X+o(\e).
\end{eqnarray*}
Therefore, dividing by~$\e$ and sending~$\e\searrow0$, we see that
\begin{equation} \label{ANCO}\Phi_E'(1)\ge
\int_{ (\partial{\mathcal{B}}_{1})\cap\{t>0\}}
t^{1-s} |\nabla_\nu \EEEE_E(X)|^2\,d{\mathcal{H}}^n_X.\end{equation}
On the other hand, in light of~\eqref{SCALE}, we know that
\begin{equation}\label{SCA22} \Phi_{E_\lambda} (r)= \Phi_{E_{\lambda r/\rho}} (\rho) ,\end{equation}
for all~$r$, $\rho$, $\lambda>0$,
and thus, choosing~$\rho:=\lambda r$,
$$ \Phi_{E_{\lambda}} (r)= \Phi_{ E } (\lambda r) .$$
As a consequence, taking~$\lambda:=R$ and~$r:=1+h$,
and~$\lambda:=R$ and~$r:=1$, we see that, for all~$R>0$,
$$ \Phi_{ E }' (R)=\lim_{h\to0} \frac{ \Phi_{ E } (R(1+h))-\Phi_{ E } (R)}{Rh}=
\lim_{h\to0} \frac{ \Phi_{E_{R}} (1+h)-\Phi_{E_{R}} (1)}{Rh}=\frac{\Phi_{E_R}' (1)}{R}.
$$
Combining this and~\eqref{ANCO} (used here on the set~$E_R$), we obtain that
\begin{eqnarray*}\Phi_{ E }' (R)&
\ge&\frac1R
\int_{ (\partial{\mathcal{B}}_{1})\cap\{t>0\}}
t^{1-s} |\nabla_\nu \EEEE_{E_R}(X)|^2\,d{\mathcal{H}}^n_X\\&
=&R
\int_{ (\partial{\mathcal{B}}_{1})\cap\{t>0\}}
t^{1-s} |\nabla_\nu \EEEE_{ E }(RX)|^2\,d{\mathcal{H}}^n_X\\&
=&
R^{s-n}
\int_{ (\partial{\mathcal{B}}_{R})\cap\{t>0\}}
t^{1-s} |\nabla_\nu \EEEE_{ E }(X)|^2\,d{\mathcal{H}}^n_{X},
\end{eqnarray*}
that is~\eqref{MONI}, as desired.

Now, if~$E$ is a cone, from~\eqref{SCALE} we have that~$
\Phi_E (r)= \Phi_{ E } (\rho)$ for any~$r$, $\rho>0$, and therefore~$\Phi_E$ is constant.

Viceversa, if~$\Phi_E$ is constant, we deduce from~\eqref{MONI} that
$$ \int_{ (\partial{\mathcal{B}}_{r})\cap\{t>0\}}
t^{1-s} |\nabla_\nu \EEEE_{ E }(X)|^2\,d{\mathcal{H}}^n_{X}=0$$
for all~$r>0$, and therefore~$X\cdot\nabla \EEEE_{ E }(X)=0$ for all~$X\in\R^{n+1}_+$.
By Euler's Formula, this gives that~$\EEEE_{ E }$ is homogeneous of degree zero,
and consequently, for any~$\tau>0$,
$$ \chi_E(\tau x)=\EEEE_{ E }(\tau x,0)=\EEEE_{ E }(x,0)=\chi_E(x),$$
and hence~$E$ is a cone.
\end{proof}

\section{Homogeneous structure of the blow-up limits and proof of Corollary~\ref{HOMCON}}\label{COPROOF}

In this section, we analyze the structure of the blow-up limit
of local minimizers and we prove Corollary~\ref{HOMCON}.
To this end, we need the forthcoming
auxiliary result which can be seen as the counterpart of Proposition~9.1
in~\cite{MR2675483}
in our setting.

\begin{lemma}\label{LAER-01}
Let~$E\subseteq H$ be a local minimizer in~$H$.
Let~$E_k\subseteq H$ be a sequence of local minimizers in~$H$
and suppose that~$E_k\to E$ in~$L^1_{\mathrm loc}(\R^n)$ as~$k\to+\infty$.

Then,
$$\lim_{k\to+\infty}\Phi_{E_k}(r)=\Phi_E(r)\qquad{\mbox{ for all }}r>0.$$\end{lemma}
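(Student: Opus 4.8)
The plan is to establish the convergence $\Phi_{E_k}(r)\to\Phi_E(r)$ by splitting $\Phi = G + (\sigma-1)J$ as in the proof of Theorem~\ref{MONOTONICITY} and treating the two pieces separately. The term $J(r)=r^{s-n}{\mathcal I}_s(B_rE,H^c)$ is the easier one: since $E_k\to E$ in $L^1_{\mathrm{loc}}(\R^n)$ we have $\chi_{E_k}\to\chi_E$ pointwise a.e.\ along a subsequence, and on $B_r$ the integrand $\chi_{E_k}(x)|x-y|^{-n-s}$ is dominated by the integrable kernel restricted to $B_r\times H^c$ (integrability on $B_r\times H^c$ being a consequence of $\mathrm{Per}_s(E_k,B_RH)<\infty$ together with the fact that the distance between $B_r$ and $H^c$ enters favourably once $r$ is compared with $R$; more simply one uses that ${\mathcal I}_s(B_r,H^c)<\infty$ for each fixed $r$). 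Hence $J_{E_k}(r)\to J_E(r)$ by dominated convergence, and since the full sequence has every subsequence admitting a further subsequence with this limit, the whole sequence converges.

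The genuinely substantive part is the Dirichlet term $G(r)=r^{s-n}\int_{{\mathcal B}_r^+}t^{1-s}|\nabla\EEEE_{E_k}|^2$. Here I would argue by a two-sided inequality using the extension characterization of minimality (Proposition~\ref{EXTENSION}) exactly as in the proof of Proposition~9.1 of \cite{MR2675483}. For the $\limsup$: fix $r$ and a slightly larger radius $\rho>r$; on the annular region one can interpolate between $\EEEE_{E_k}$ near $\partial{\mathcal B}_\rho$ and a competitor built from $\EEEE_E$ inside, using a cutoff in the $t$-direction, and the $L^1_{\mathrm{loc}}$ convergence of the traces together with the continuity of the extension map $u\mapsto\EEEE_u$ (which is continuous from $L^1_{\mathrm{loc}}$ into $H^1_{\mathrm{loc}}(t^{1-s})$ on compact subsets of $\R^{n+1}_+$, since away from $\{t=0\}$ the Poisson kernel and its gradient are bounded) controls the error; minimality of $E_k$ — via \eqref{TRACCIA2} — then bounds ${\mathcal F}_{s,\sigma}(\EEEE_{E_k},{\mathcal B}_\rho)$ by the energy of this competitor, giving $\limsup_k G_{E_k}(r)\le \limsup_k G_{E_k}(\rho)\le G_E(\rho)+o_{\rho\to r}(1)$, and one lets $\rho\searrow r$. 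For the $\liminf$: symmetrically, use $E$'s minimality against a competitor built from $\EEEE_{E_k}$ to get $G_E(r)\le\liminf_k G_{E_k}(\rho)+o(1)$; combined with monotonicity of $\Phi$ (Theorem~\ref{MONOTONICITY}) to handle the mismatch of radii and right-continuity of $\Phi_E$, the two bounds pinch $G_{E_k}(r)\to G_E(r)$ for a.e.\ $r$, hence — again by monotonicity — for every $r$.

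The main obstacle I anticipate is the bookkeeping in the competitor construction: one must simultaneously match the trace (so that the competitor is again of the form $\chi_F$ with $F\setminus B_{R-\eta}$ fixed, as required by Proposition~\ref{EXTENSION}(ii)) and match $\EEEE$ near $\partial\Omega$, while keeping the Dirichlet energy of the transition layer small. The trace-matching forces the competitor's boundary datum to be a characteristic function, so one cannot simply linearly interpolate the extensions; instead I would glue in the slab $\{0<t<\delta\}$ the extension of the correct characteristic function to $\EEEE_{E_k}$ evaluated at height $\delta$, and estimate the resulting energy on $\{t<\delta\}$ by $O(\delta^{?})$ plus a term controlled by $\mathrm{Per}_s$ of the symmetric difference $E_k\triangle E$ inside $B_r$, which vanishes as $k\to\infty$ — this last estimate being precisely where Lemma~7.2 of \cite{MR2675483} (quoted as \eqref{PIV}) and the uniform perimeter bounds enter. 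Modulo these estimates, which are routine adaptations of \cite{MR2675483}, the result follows.
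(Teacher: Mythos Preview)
Your proposal handles the $J$ piece exactly as the paper does, by dominated convergence. The treatment of the Dirichlet piece $G$, however, follows a genuinely different route.

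You plan to work at the level of the extension, constructing competitors admissible in Proposition~\ref{EXTENSION}(ii) and comparing ${\mathcal F}_{s,\sigma}(\EEEE_{E_k},{\mathcal B}_\rho)$ with these. As you correctly anticipate, the obstacle is that the competitor's trace must be a characteristic function, which rules out naive interpolation of extensions; your slab-gluing remedy is plausible but left sketchy, and you also invoke the monotonicity formula (Theorem~\ref{MONOTONICITY}) to reconcile the two-sided bounds at mismatched radii and then upgrade from a.e.\ $r$ to every $r$.

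The paper avoids all of this by working one level down. Using Proposition~7.1 of~\cite{MR2675483}, it bounds $\int_{{\mathcal B}_r^+}t^{1-s}|\nabla(\EEEE_{E_k}-\EEEE_E)|^2\,dX$ directly by the Gagliardo quantity $\|f_k-f\|_{L^2({\mathcal Q}_{r,\delta})}^2$, where $f_k(x,y)=(\chi_{E_k}(x)-\chi_{E_k}(y))|x-y|^{-(n+s)/2}$ and similarly for $f$. It then shows $f_k\to f$ in $L^2({\mathcal Q}_{r,\delta})$ by the standard ``weak convergence plus convergence of norms'' mechanism: uniform $L^2$ bounds come from comparing $E_k$ with $E_k\setminus B$, and the key inequality $\limsup_k\|f_k\|^2\le\|f\|^2$ is obtained by testing the minimality of $E_k$ against the competitor $G_k:=(E\cap B)\cup(E_k\setminus B)$ (with $B=B_{r+\delta}$) \emph{directly in the original formulation}~\eqref{MINIMOBLOW}. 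The resulting error is controlled by ${\mathcal I}_s(B,D_k B^c)$ with $D_k=E_k\triangle E$, which vanishes as $k\to\infty$.

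The decisive difference is that the paper's competitor $G_k$ is a \emph{set}, so no extension interpolation and no trace-matching is required; the argument never leaves the fractional-perimeter level, yields convergence for every $r$ at once, and does not use the monotonicity of $\Phi$. Your approach should also succeed with enough care, but it is more laborious and carries exactly the technical burden you identify in your last paragraph.
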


\begin{proof} We note that
\begin{equation}\label{du83b} \begin{split}
r^{n-s}\Phi_{E_k} (r)\,&
={\mathcal{F}}_{s,\sigma}(\EEEE_{E_k},{\mathcal{B}}_r)\\
&= \int_{{\mathcal{B}}_r^+} t^{1-s} |\nabla \EEEE_{E_k}(X)|^2\,dX+(\sigma-1)\iint_{(B_rH)\times H^c} \frac{
\chi_{E_k}(x)
}{|x-y|^{n+s}}\,dx\,dy.\end{split}\end{equation}
By the Dominated Convergence Theorem, we have that
\begin{equation}\label{7675-som} \lim_{k\to+\infty}\iint_{(B_rH)\times H^c} \frac{
\chi_{E_k}(x)
}{|x-y|^{n+s}}\,dx\,dy=
\iint_{(B_rH)\times H^c} \frac{
\chi_{ E }(x)
}{|x-y|^{n+s}}\,dx\,dy.\end{equation}
By this and~\eqref{du83b} we see that, to prove the desired result, it suffices to show that
\begin{equation}\label{du83b-2}
\lim_{k\to+\infty}\int_{{\mathcal{B}}_r^+} t^{1-s} |\nabla \EEEE_{E_k}(X)|^2\,dX=
\int_{{\mathcal{B}}_r^+} t^{1-s} |\nabla \EEEE_{ E }(X)|^2\,dX.
\end{equation}
To this end, we use formula~(7.2) in
Proposition~7.1 in~\cite{MR2675483}
and we write that, given~$r$, $\delta>0$,
\begin{equation*}
\begin{split}&\int_{{\mathcal{B}}_r^+} t^{1-s} |\nabla (\EEEE_{E_k}-\EEEE_{ E })(X)|^2\,dX
=\int_{{\mathcal{B}}_r^+} t^{1-s} |\nabla \EEEE_{\chi_{E_k}-\chi_{ E }} (X)|^2\,dX
\\&\qquad
\le C_{r,\delta}\,
\int_{{\mathcal{Q}}_{r,\delta}}\frac{|(\chi_{E_k}-\chi_{ E })(x)-
(\chi_{E_k}-\chi_{ E })(y)|^2
}{|x-y|^{n+s}}
\,dx\,dy,\end{split}
\end{equation*}
for some~$C_{r,\delta}>0$, where
$$ {{\mathcal{Q}}_{r,\delta}}:=
\R^{2n}\setminus (B_{r+\delta}^c\times B_{r+\delta}^c).$$
Consequently, the claim in~\eqref{du83b-2} is established once we show that
\begin{equation}\label{du83b-25}
\lim_{k\to+\infty}
\int_{{\mathcal{Q}}_{r,\delta}}\frac{|(\chi_{E_k}-\chi_{ E })(x)-
(\chi_{E_k}-\chi_{ E })(y)|^2
}{|x-y|^{n+s}}
\,dx\,dy=0.\end{equation}
It is convenient to define
$$ f_k(x,y):=
\frac{ \chi_{E_k}(x)-\chi_{E_k}(y)}{|x-y|^{\frac{n+s}2}}\qquad{\mbox{ and }}\qquad
f(x,y):=
\frac{\chi_{ E }(x)-\chi_{ E }(y)}{|x-y|^{\frac{n+s}2}}.$$
In this way, claim~\eqref{du83b-25}
can be written as
\begin{equation}\label{du83b-26}
\lim_{k\to+\infty} \|f_k-f\|_{L^2({\mathcal{Q}}_{r,\delta})}=0.\end{equation}

We now use~$\bowtie$ as a short notation for~$\frac{\chi_{{\mathcal{Q}}_{r,\delta}}(x,y)\,dx\,dy}{|x-y|^{n+s}}$
and set~$B:=B_{r+\delta}$. We point out that
\begin{equation}\label{06b ewme}
\begin{split}
\frac{\|f_k\|^2_{L^2({\mathcal{Q}}_{r,\delta})}}2\,&=\iint_{E_k \times E_k^c} \bowtie\\&=
\iint_{(E_kB) \times E_k^c} \bowtie
+\iint_{(E_k B^c)\times E_k^c} \bowtie\\&=
\iint_{(E_kB) \times (E_k^cH)} \bowtie+\iint_{(E_kB) \times (E_k^cH^c)} \bowtie\\&\qquad
+\iint_{(E_k B^c)\times (E_k^cH)} \bowtie
+\iint_{(E_k B^c)\times (E_k^cH^c)} \bowtie\\&={\mathcal{I}}_s(E_kB,E_k^cH)+
{\mathcal{I}}_s(E_kB,E_k^cH^c)+
{\mathcal{I}}_s(E_k B^c,E_k^cBH)+
{\mathcal{I}}_s(E_k B^c,E_k^cBH^c)
\end{split}\end{equation}
and therefore
\begin{eqnarray*}
\frac{\|f_k\|^2_{L^2({\mathcal{Q}}_{r,\delta})}}2
&\le&
{\mathcal{I}}_s(E_kB,E_k^cH)+
{\mathcal{I}}_s(E_kB,E_k^cBH^c)+
{\mathcal{I}}_s(E_k B^c,E_k^cBH)+2
{\mathcal{I}}_s(B,B^c)\\&\le&
{\mathcal{I}}_s(E_kB,E_k^cH)+
{\mathcal{I}}_s(E_k B^c,E_k^cBH)+2
{\mathcal{I}}_s(B,B^c)+
{\mathcal{I}}_s(BH,BH^c)\\&=&
{\mathcal{I}}_s(E_kB,E_k^cH)+
{\mathcal{I}}_s(E_k B^c,E_k^cBH)+C_{r,\delta},
\end{eqnarray*}
with~$C_{r,\delta}$ independent of~$k$.
Hence, using
the local minimizing property of~$E_k$ in~\eqref{MINIMOBLOW}, taking~$F_k:=E_kB^c$,
\begin{eqnarray*}
\frac{\|f_k\|^2_{L^2({\mathcal{Q}}_{r,\delta})}}2&\le&
{\mathcal{I}}_s(F_kB,F_k^cH)+
{\mathcal{I}}_s(F_k B^c,F_k^cBH)
+\sigma\big(
{\mathcal{I}}_s(F_k B,H^c)-
{\mathcal{I}}_s(E_k B,H^c)
\big)
+C_{r,\delta}
\\&\le&0+
{\mathcal{I}}_s( B^c,B)
+\sigma\big(
0-
{\mathcal{I}}_s(E_k B,H^c)
\big)
+C_{r,\delta}\\&\le&2C_{r,\delta}.
\end{eqnarray*}
This and Fatou's Lemma yield that
$$ \|f\|^2_{L^2({\mathcal{Q}}_{r,\delta})}\le4C_{r,\delta}.$$
Now we remark that to prove~\eqref{du83b-26}
it suffices to show that
\begin{equation}\label{du83b-27}
\lim_{k\to+\infty} \|f_k\|_{L^2({\mathcal{Q}}_{r,\delta})}=\|f\|_{L^2({\mathcal{Q}}_{r,\delta})}.\end{equation}
Indeed, suppose that~\eqref{du83b-27} holds true
and notice that~$f_k$ converges to~$f$ pointwise.
Let~$\varphi\in C^\infty_0({\mathcal{Q}}_{r,\delta})$
and observe that
$$ |f_k(x,y)\,\varphi(x,y)|\le \frac{ |\varphi(x,y)|}{|x-y|^{\frac{n+s}2}}\in L^1(
{\mathcal{Q}}_{r,\delta}).$$
Hence, by the Dominated Convergence Theorem,
$$ \lim_{k\to+\infty}\int_{{\mathcal{Q}}_{r,\delta}} f_k\varphi
=\int_{{\mathcal{Q}}_{r,\delta}} f\varphi.$$
By density, given~$\e>0$, we can pick~$\varphi_\e\in C^\infty_0({\mathcal{Q}}_{r,\delta})$
such that~$\| \varphi_\e-f\|_{L^2({\mathcal{Q}}_{r,\delta}))}\le\e$. In this way, we find that
\begin{eqnarray*}&&\limsup_{k\to+\infty}
\left| \int_{{\mathcal{Q}}_{r,\delta}} f_k f
-\int_{{\mathcal{Q}}_{r,\delta}} f^2
\right|\\&\le&\limsup_{k\to+\infty}
\left| \int_{{\mathcal{Q}}_{r,\delta}} f_k\varphi_\e
-\int_{{\mathcal{Q}}_{r,\delta}} f^2
\right|+
\int_{{\mathcal{Q}}_{r,\delta}} f_k|f-\varphi_\e|
\\&\le&\left| \int_{{\mathcal{Q}}_{r,\delta}} f\varphi_\e
-\int_{{\mathcal{Q}}_{r,\delta}} f^2
\right|+\limsup_{k\to+\infty}
\| f_k\|_{L^2({\mathcal{Q}}_{r,\delta}))}
\| \varphi_\e-f\|_{L^2({\mathcal{Q}}_{r,\delta}))}\\&\le&
\limsup_{k\to+\infty}\big(\| f\|_{L^2({\mathcal{Q}}_{r,\delta}))}+
\| f_k\|_{L^2({\mathcal{Q}}_{r,\delta}))}\big)
\| \varphi_\e-f\|_{L^2({\mathcal{Q}}_{r,\delta}))}
\\&\le& 4\e\sqrt{C_{r,\delta}}.
\end{eqnarray*}
Hence, since~$\e$ can be taken arbitrarily small,
$$ \lim_{k\to+\infty}\int_{{\mathcal{Q}}_{r,\delta}} f_k f
=\int_{{\mathcal{Q}}_{r,\delta}} f^2.$$
As a result, if~\eqref{du83b-27} holds true,
we obtain that
$$ \lim_{k\to+\infty} \|f_k-f\|_{L^2({\mathcal{Q}}_{r,\delta})}^2=
\lim_{k\to+\infty} \|f_k\|_{L^2({\mathcal{Q}}_{r,\delta})}^2+
\|f\|_{L^2({\mathcal{Q}}_{r,\delta})}^2-2\int_{{\mathcal{Q}}_{r,\delta}} f_k f
=0,$$
that is~\eqref{du83b-26}.

In view of this observation, to complete the proof of Lemma~\ref{LAER-01},
we are left with proving~\eqref{du83b-27}. As a matter of fact,
by Fatou's Lemma, to prove~\eqref{du83b-27} it suffices to check that
\begin{equation}\label{du83b-28}
\limsup_{k\to+\infty} \|f_k\|_{L^2({\mathcal{Q}}_{r,\delta})}\le\|f\|_{L^2({\mathcal{Q}}_{r,\delta})},\end{equation}
and therefore the remaining part of this proof is devoted to show the latter inequality.
To this end,
we let~$D_k$ be the symmetric difference of~$E_k$ and~$E$, and we define
$$ G_k:=(E B)\cup(E_k B^c).$$
The local minimizing property of~$E_k$ as stated in~\eqref{MINIMOBLOW} yields that
\begin{eqnarray*}&&
{\mathcal{I}}_s(E_kB,E_k^c H)+
{\mathcal{I}}_s(E_k B^c ,E_k^c BH)+\sigma{\mathcal{I}}_s(E_kB,H^c)\\
&\le&
{\mathcal{I}}_s(G_kB,G_k^c H)+{\mathcal{I}}_s(G_k B^c ,G_k^c BH)
+\sigma{\mathcal{I}}_s(G_kB,H^c) \\&=&{\mathcal{I}}_s(EB,G_k^c H)+{\mathcal{I}}_s(E_k B^c ,E^c BH)+\sigma
{\mathcal{I}}_s(EB,H^c)\\
&=&{\mathcal{I}}_s(EB,E^c BH)+
{\mathcal{I}}_s(EB,E_k^c B^cH)+{\mathcal{I}}_s(E_k B^c ,E^c BH)+\sigma
{\mathcal{I}}_s(EB,H^c)\\&\le&
{\mathcal{I}}_s(EB,E^c BH)+
{\mathcal{I}}_s(EB,E^c B^cH)+{\mathcal{I}}_s(E B^c ,E^c BH)+\sigma
{\mathcal{I}}_s(EB,H^c)\\&&\quad
+{\mathcal{I}}_s(EB,D_k B^cH)
+{\mathcal{I}}_s(D_k B^c ,E^c BH)\\&\le&
{\mathcal{I}}_s(EB,E^c H)+{\mathcal{I}}_s(E B^c ,E^c BH)+\sigma
{\mathcal{I}}_s(EB,H^c)
+2{\mathcal{I}}_s(B,D_k B^c).
\end{eqnarray*}
By~\cite{MR2675483} (see in particular the proof of Theorem~3.3 there), we know that
$$ \lim_{k\to+\infty}{\mathcal{I}}_s(B,D_k B^c)=0,$$
and accordingly we can write that
\begin{eqnarray*}&&\limsup_{k\to+\infty}
{\mathcal{I}}_s(E_kB,E_k^c H)+
{\mathcal{I}}_s(E_k B^c ,E_k^c BH)+\sigma{\mathcal{I}}_s(E_kB,H^c)
\\&\le&
{\mathcal{I}}_s(EB,E^c H)+{\mathcal{I}}_s(E B^c ,E^c BH)+\sigma
{\mathcal{I}}_s(EB,H^c).\end{eqnarray*}
Hence, recalling~\eqref{7675-som},
\begin{equation}\label{bthes9kNS}\limsup_{k\to+\infty}
{\mathcal{I}}_s(E_kB,E_k^c H)+
{\mathcal{I}}_s(E_k B^c ,E_k^c BH)
\le
{\mathcal{I}}_s(EB,E^c H)+{\mathcal{I}}_s(E B^c ,E^c BH).\end{equation}
Besides, from~\eqref{06b ewme},
\begin{eqnarray*}
\frac{\|f_k\|^2_{L^2({\mathcal{Q}}_{r,\delta})}}2&=&
{\mathcal{I}}_s(E_kB,E_k^cH)+
{\mathcal{I}}_s(E_kB,E_k^cH^c)+
{\mathcal{I}}_s(E_k B^c,E_k^cBH)+
{\mathcal{I}}_s(E_k B^c,E_k^cBH^c),
\end{eqnarray*}
and a similar formula holds true by replacing~$f_k$ by~$f$ and~$E_k$ by~$E$.

In this way, exploiting again the Dominated Convergence Theorem, we deduce that
\begin{eqnarray*}&&
\limsup_{k\to+\infty}
\frac{\|f_k\|^2_{L^2({\mathcal{Q}}_{r,\delta})}-\|f\|^2_{L^2({\mathcal{Q}}_{r,\delta})}}2\\&=&
\limsup_{k\to+\infty}
{\mathcal{I}}_s(E_kB,E_k^cH)+
{\mathcal{I}}_s(E_kB,E_k^cH^c)+
{\mathcal{I}}_s(E_k B^c,E_k^cBH)+
{\mathcal{I}}_s(E_k B^c,E_k^cBH^c)\\&&\quad-
{\mathcal{I}}_s(EB,E^cH)-
{\mathcal{I}}_s(EB,E^cH^c)-
{\mathcal{I}}_s(E B^c,E^cBH)-
{\mathcal{I}}_s(E B^c,E^cBH^c)\\&=&\limsup_{k\to+\infty}
{\mathcal{I}}_s(E_kB,E_k^cH)+
{\mathcal{I}}_s(E_k B^c,E_k^cBH)-
{\mathcal{I}}_s(EB,E^cH)-
{\mathcal{I}}_s(E B^c,E^cBH).
\end{eqnarray*}
{F}rom this and~\eqref{bthes9kNS} we obtain~\eqref{du83b-28}, as desired.
\end{proof}

With this preliminary work, we can now complete the proof of Corollary~\ref{HOMCON} by arguing as follows.

\begin{proof}[Proof of Corollary~\ref{HOMCON}]
The proof is based on a double blow-up procedure, combined
with the monotonicity formula in Theorem~\ref{MONOTONICITY}.

First of all, we consider the sequence of sets~$E_{1/k}$,
with~$k\in\N$. By Theorem~A.2 in~\cite{MR3717439}, up to a subsequence,
we know that~$\chi_{E_{1/k}}$ converges in~$L^1_{\mathrm loc}(\R^n)$
to~$\chi_{E^\star}$
as~$k\to+\infty$, for a suitable~$E^\star$ contained in a half-space~$H^\star$,
with~$E^\star$ locally minimizing in~$H^\star$. Up to a rigid motion,
we can suppose that~$H^\star=H$.

Now we consider the sequence~$E^\star_{1/h}$, with~$h\in\N$.
Using again
Theorem~A.2 in~\cite{MR3717439}, up to a subsequence,
we see that~$\chi_{E^\star_{1/h}}$ converges as~$h\to+\infty$
in~$L^1_{\mathrm loc}(\R^n)$
to~$\chi_{E_0}$, for a suitable~$E_0\subseteq H$ which is locally minimizing
in~$H$. Also, thanks to Lemma~\ref{LAER-01}, we have
that
\begin{equation}\label{7676813}
\lim_{h\to+\infty} \Phi_{ E^\star_{1/h} }(r)=\Phi_{E_0}(r).\end{equation}
Then, Corollary~\ref{HOMCON} will be established once we prove the following
claims:
\begin{equation}\label{SCA01}
{\mbox{$E_0$ is a cone}}\end{equation}
and
\begin{equation}\label{SCA02}
\begin{split}&
{\mbox{there exists an infinitesimal sequence~$r_j>0$
such that}} \\&{\mbox{$\chi_{E_{r_j}}$ converges to~$\chi_{E_0}$
in~$L^1_{\mathrm loc}(\R^n)$ as~$j\to+\infty$.}}
\end{split}\end{equation}
To prove~\eqref{SCA01}, we exploit~\eqref{SCA22}
with~$\lambda:=1/h$ and~$\rho:=\lambda r$,
by writing
$$\Phi_{E^\star_{1/h}} (r) = \Phi_{E^\star} \left(\frac{r}h\right).$$
Hence, in light of~\eqref{7676813},
\begin{equation}\label{7762} \Phi_{E_0}(r)=
\lim_{h\to+\infty} \Phi_{ E^\star_{1/h} }(r)=\lim_{h\to+\infty}\Phi_{E^\star} \left(\frac{r}h\right)=
\lim_{\delta\searrow0}\Phi_{E^\star} (\delta) .\end{equation}
Notice that the latter limit exists, due to the monotonicity
of the function proved in Theorem~\ref{MONOTONICITY}.
Furthermore, the identity in~\eqref{7762}
says that~$\Phi_{E_0}$ is constant and then, by Theorem~\ref{MONOTONICITY},
$E_0$ must necessarily be a cone, which proves~\eqref{SCA01}.

Now we prove~\eqref{SCA02}. For this, let~$R>0$.
By the convergence of~$E^\star_{1/h}$, we know that, given~$\e>0$,
there exists~$h_0(R,\e)\in\N$ such that, for all~$h\ge h_0(R,\e)$,
\begin{equation}\label{98923}
\int_{B_R} |\chi_{E^\star_{1/h}}(x)-\chi_{E_0}(x)|\,dx\le \e.\end{equation}
On the other hand, by the convergence of~$E_{1/k}$, there exists~$k_0(R,h,\e)\in\N$
such that, for all~$k\ge k_0(R,h,\e)$,
$$ \int_{B_{R/h}} |\chi_{E_{1/k}}(x)-\chi_{E^\star}(x)|\,dx\le \frac\e{h^n}.$$
Scaling back, and using~\eqref{SCA01}, this gives that, for all~$k\ge k_0(R,h,\e)$,
$$ \int_{B_{R}} |\chi_{E_{1/(hk)}}(x)-\chi_{E^\star_{1/h}}(x)|\,dx\le \e.$$
Combining this with~\eqref{98923}, we find that, for all~$k\ge k_\star(R,\e):=
k_0\big(R,h_0(R,\e),\e\big)$,
\begin{eqnarray*}&& \int_{B_{R}} |\chi_{E_{1/(h_0(R,\e)k)}}(x)-\chi_{E_0}(x)|\,dx\\&\le&
\int_{B_{R}} |\chi_{E_{1/(h_0(R,\e)k)}}(x)-\chi_{E^\star_{1/h_0(R,\e)}}(x)|\,dx+
\int_{B_{R}} |\chi_{E^\star_{1/h_0(R,\e)}}(x)-\chi_{E_0}(x)|\,dx\\&\le&2\e.
\end{eqnarray*}
This establishes~\eqref{SCA02}, as desired.
\end{proof}

\section{Locally minimizing cones in the plane and proof of Theorem~\ref{2CONE}}\label{coniin2}

In this section, we take~$n=2$, and we classify
locally minimizing cones, thus proving Theorem~\ref{2CONE}.

\begin{figure}
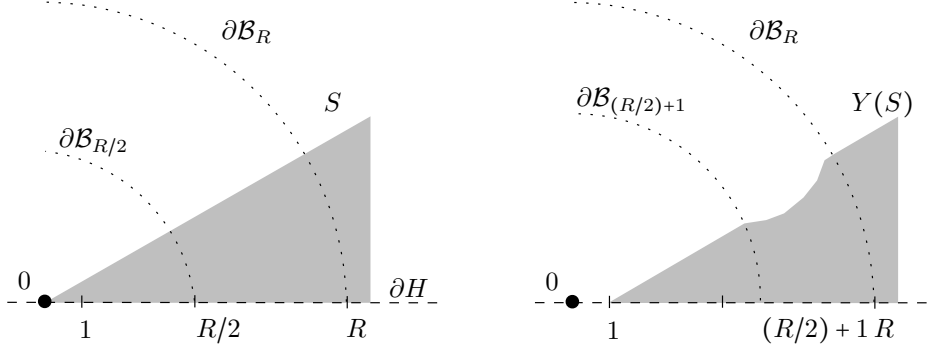\caption{{\small Depicting the action of the map $Y$ defined in \eqref{ANG} on a set $S$. Notice that $S\cap {\mathcal{B}}_{R/2}$ is translated by $e_1$, while $S\setminus{\mathcal{B}}_R$ is left unchanged. Since $\Psi$ is radially decreasing, the slices $S\cap\partial{\mathcal{B}}_\rho$ corresponding to $\rho\in(1,R)$ are translated by multiples $\lambda(\rho)\,e_1$ of $e_1$, where $\lambda(\rho)$ decreases from $\lambda=1$ when $\rho=R/2$, to $\lambda=0$ when $\rho\ge (9/10) R$.}}\label{fig:my-label}
\end{figure}

\begin{proof}[Proof of Theorem~\ref{2CONE}]
Let~$\Psi\in C^\infty_0({\mathcal{B}}_{9/10},\,[0,1])$ be a radially decreasing function with with~$\Psi(X)=1$ for all~$X\in
{\mathcal{B}}_{1/2}$.
Given~$R>2$, to be taken as large as we wish in the following, we consider the transformation
\begin{equation}\label{ANG} \R^3\ni X\mapsto Y:=X+\Psi\left(\frac{X}{R}\right) e_1,\end{equation}
where~$e_1:=(1,0,0)$.
Denoting this map by~$Y(X)$ (see Figure~\ref{fig:my-label}), we see that it is invertible, and
we denote its inverse by~$X(Y)$. We also let
\begin{equation}\label{DISUA}
U:=\EEEE_E,\end{equation} and
$$ U_R^+(Y):= U(X(Y)).$$
We also denote~$U^-_R$ a similar function, in which~$\Psi$
is replaced by~$-\Psi$. In addition, we set~$u(x):=U(x,0)$,
$u^+_R(y):=U^+_R(y,0)$ and~$u^-_R(y):=U^-_R(y,0)$.

We use coordinates~$X=(X_1,X_2,X_3)=(x,t)\in\R^2\times(0,+\infty)$.
We remark that~$Y_3(X)=X_3$, hence~$X_3(Y)=Y_3$,
and accordingly~$X_3(y,0)=0$. This gives that
\begin{equation}\label{FAN}
u^+_R(y)=U(X(y,0))=U(x(y,0),0)=\chi_E(x(y,0)).
\end{equation}

Then, in the notation of~\eqref{FSSIG}, we claim that
\begin{equation}\label{SPA:901}
\big| {\mathcal{F}}_{s,\sigma}(U^+_R,{\mathcal{B}}_R)+
{\mathcal{F}}_{s,\sigma}(U^-_R,{\mathcal{B}}_R)-2
{\mathcal{F}}_{s,\sigma}(U,{\mathcal{B}}_R)\big|\le\frac{C}{R^s},\end{equation}
for some~$C>0$. To prove this, we let
$$ {\mathcal{J}}_R (U):=\int_{{\mathcal{B}}_R^+} t^{1-s} |\nabla U(X)|^2
\,dX\qquad{\mbox{ and }}\qquad
{\mathcal{T}}_R(u):=
\iint_{B_R\times H^c} \frac{u(x)}{|x-z|^{2+s}}\,dx\,dz.$$
A direct computation (see Lemma~1 in~\cite{MR3090533})
shows that
\begin{equation}\label{SPA:902}
\big| {\mathcal{J}}_R(U^+_R)+
{\mathcal{J}}_R(U^-_R)-2
{\mathcal{J}}_R(U)\big|\le\frac{C}{R^s},
\end{equation}
for some~$C>0$.

We introduce the following notation: from now on, we denote
by~$\diamondsuit$ any quantity or bounded function, possibly different from line to line,
which changes sign if~$\Psi$ is replaced by~$-\Psi$. We stress that
it is not necessary that~$\diamondsuit$ has a sign itself,
what matters in this notation is that its pointwise value changes
sign if~$\Psi$ is replaced by~$-\Psi$.

Now, we want to use the change of variable~$\tilde y:=x(y,0)$
and~$\tilde z:=x(y,0)-y+z$. In this way, we have that
$$ \tilde y-\tilde z=y-z.$$
We also observe that, if~$z\in H^c$,
then~$\tilde z_2=x_2(y,0)-y_2+z_2=z_2\le0$, and thus~$\tilde z\in H^c$.

Furthermore, for all~$i$, $j\in\{1,2,3\}$,
$$ D_{X_i} Y_j(X)={\mathrm{Id}}+\frac{\delta_{1j}}{R}\,
\partial_i\Psi\left(\frac{X}{R}\right)={\mathrm{Id}}+\frac{\diamondsuit}{R} .$$
Therefore, we can write that
$$ dy\,dz=\left( 1+\frac{\diamondsuit}{R}+O\left(\frac1{R^2}\right)\right)\,d\tilde y\,d\tilde z.$$
We also point out that
\begin{equation}\label{0-0-he}
{\mbox{if~$y\in B_R$, then~$x(y,0)\in B_R$.}}
\end{equation}
Indeed, if~$|y|\le\frac{99 \,R}{100}$, then
$$ |x(y,0)|=\left|y-\Psi\left(\frac{x(y,0)}{R}\right)e_1\right|
\le \frac{99 \,R}{100}+1<R,$$
as long as~$R$ is large enough.

If instead~$|y|>\frac{99 \,R}{100}$, it follows that
$$ |x(y,0)|=\left|y-\Psi\left(\frac{x(y,0)}{R}\right)e_1\right|
\ge|y|-1>\frac{99 \,R}{100}-1>\frac{9\,R}{10},$$
and consequently~$\Psi\left(\frac{x(y,0)}{R}\right)=0$,
whence~$x(y,0)=y$ in this case.

These considerations prove~\eqref{0-0-he}.
Hence, recalling~\eqref{FAN},
\begin{eqnarray*}
{\mathcal{T}}_R(u^+_R)&=&
\iint_{B_R\times H^c} \frac{u_R^+(y)}{|y-z|^{2+s}}\,dy\,dz\\&=&
\iint_{B_R\times H^c} \frac{\chi_E(x(y,0))}{|y-z|^{2+s}}\,dy\,dz
\\&=&
\iint_{B_R\times H^c} \frac{\chi_E(\tilde y)}{|\tilde y-\tilde z|^{2+s}}
\left( 1+\frac{\diamondsuit}{R}+O\left(\frac1{R^2}\right)\right)
\,d\tilde y\,d\tilde z.
\end{eqnarray*}
Given our notation related to~$\diamondsuit$, this also says that
$$ {\mathcal{T}}_R(u^-_R)=\iint_{B_R\times H^c} \frac{\chi_E(\tilde y)}{|\tilde y-\tilde z|^{2+s}}
\left( 1-\frac{\diamondsuit}{R}+O\left(\frac1{R^2}\right)\right)
\,d\tilde y\,d\tilde z.$$
As a consequence,
\begin{eqnarray*}&& \big|{\mathcal{T}}_R(u^+_R)+ {\mathcal{T}}_R(u^-_R)-2
{\mathcal{T}}_R(u)\big|\le
O\left(\frac1{R^2}\right)
\iint_{B_R\times H^c} \frac{\chi_E(\tilde y)}{|\tilde y-\tilde z|^{2+s}}
\,d\tilde y\,d\tilde z\\&&\qquad\qquad
\le O\left(\frac1{R^2}\right)
\iint_{B_R H\times H^c} \frac{d\tilde y\,d\tilde z}{|\tilde y-\tilde z|^{2+s}}
\le O\left(\frac1{R^2}\right){\mathcal{I}}_s( B_RH, (B_RH)^c)
=O\left(\frac1{R^s}\right).
\end{eqnarray*}
{F}rom this, \eqref{FSSIG} and~\eqref{SPA:902}, we obtain~\eqref{SPA:901},
up to renaming~$C>0$,
as desired.

Moreover, from~\eqref{TRACCIA2}, we can write that
$$ {\mathcal{F}}_{s,\sigma}(U,{\mathcal{B}}_R)
\le {\mathcal{F}}_{s,\sigma}(U_R^-,{\mathcal{B}}_R).$$
Using this and~\eqref{SPA:901}, we conclude that
\begin{equation}\label{6.5}
{\mathcal{F}}_{s,\sigma}(U^+_R,{\mathcal{B}}_R)-
{\mathcal{F}}_{s,\sigma}(U,{\mathcal{B}}_R)\le {\mathcal{F}}_{s,\sigma}(U^+_R,{\mathcal{B}}_R)+
{\mathcal{F}}_{s,\sigma}(U^-_R,{\mathcal{B}}_R)-2
{\mathcal{F}}_{s,\sigma}(U,{\mathcal{B}}_R)\le\frac{C}{R^s}.
\end{equation}
Now we claim that
\begin{equation}\label{ALAM}
\begin{split}&
{\mbox{$U$ is monotone in the direction~$e_1$,}}\\&{\mbox{namely
either~$U(X+\tau e_1)\ge U(X)$ or~$U(X+\tau e_1)\le U(X)$, for every~$\tau>0$.}}
\end{split}\end{equation}
To prove this, we argue by contradiction, supposing that
there exist~$\bar X\in\R^3_+$ and~$\bar\tau_1$, $\bar\tau_2>0$ such that
\begin{equation}\label{ALAM2a}
U(\bar X+\bar\tau_1 e_1)> U(\bar X)\qquad{\mbox{ and }}\qquad
U(\bar X+\bar\tau_2 e_1)< U(\bar X).
\end{equation}
Since~$E$ is a cone, we have that~$U$ is homogeneous of degree zero,
and therefore, letting~$P:=\bar\tau_1^{-1}\bar X$
and~$Q:=\bar\tau_2^{-1}\bar X$, we can write~\eqref{ALAM2a} as
\begin{equation}\label{ALAM2}\begin{split}&
U(P+e_1)=U(\bar\tau_1^{-1}\bar X+e_1)=
U(\bar X+\bar\tau_1e_1)> U(\bar X)=U(\bar\tau_1^{-1}\bar X)=
U(P)
\\{\mbox{ and }}\qquad&
U(Q+e_1)=U(\bar\tau_2^{-1}\bar X+e_1)=
U(\bar X+\bar\tau_2 e_1)< U(\bar X)=
U(\bar\tau_2^{-1}\bar X)=U(Q).\end{split}
\end{equation}
We can suppose that
\begin{equation}\label{LARGO}
R/2>M:=2+|Q|+|P|,\end{equation} and we set
$$ V_R(X):=\min\{ U(X),\,U^+_R(X)\}\qquad{\mbox{ and }}\qquad
W_R(X):=\max\{ U(X),\,U^+_R(X)\}.$$
We remark that
\begin{equation}\label{SH:923e2}
{\mathcal{F}}_{s,\sigma}(V_R,{\mathcal{B}}_R)+
{\mathcal{F}}_{s,\sigma}(W_R,{\mathcal{B}}_R)=
{\mathcal{F}}_{s,\sigma}(U,{\mathcal{B}}_R)+
{\mathcal{F}}_{s,\sigma}(U^+_R,{\mathcal{B}}_R).
\end{equation}
In addition, by~\eqref{TRACCIA2},
$$ {\mathcal{F}}_{s,\sigma}(U,{\mathcal{B}}_R)\le
{\mathcal{F}}_{s,\sigma}(V_R,{\mathcal{B}}_R).$$
Combining this and~\eqref{SH:923e2}, we find that
\begin{equation}\label{SH:923e3}
{\mathcal{F}}_{s,\sigma}(W_R,{\mathcal{B}}_R)\le
{\mathcal{F}}_{s,\sigma}(U^+_R,{\mathcal{B}}_R).
\end{equation}
Now, we denote by~$W_\star$ the minimizer of~${\mathcal{J}}_M(W)$
among all the competitors~$W$ with~$W=W_R$
on~$\partial{\mathcal{B}}_M^+=\big(
(\partial{\mathcal{B}}_M)\cap\{t>0\}\big)\cup\big( B_M\times\{0\}\big)$.

We remark that the minimization of the functional leads to the equation
\begin{equation}\label{41} {\mathrm{div}}\,(t^{1-s}\nabla W_\star)=0\quad{\mbox{ in }}\,
{\mathcal{B}}_M^+.\end{equation}
Also, the same equation is fulfilled by~$U$, in view of~\eqref{DISUA}.

We claim that
\begin{equation}\label{NONAUG}
W_\star\ne W_R.
\end{equation}
Indeed, suppose by contradiction
that~$W_\star= W_R$. Then, since~$U\le W_R=W_\star$,
we deduce by the Strong Maximum Principle for the equation in~\eqref{41}
(see e.g. Corollary~2.3.10 in~\cite{MR643158}) that
\begin{equation}\label{EIT}
{\mbox{either~$U<W_R$
or~$U=W_R$ in~${\mathcal{B}}_M^+$.}}\end{equation}
On the other hand, by~\eqref{LARGO}, we have that, for all~$i\in\{1,2\}$,
$$ Y(P)=P+\Psi\left(\frac{P}{R}\right)e_1=P+ e_1\qquad{\mbox{and}}\qquad
Y(Q)=Q+\Psi\left(\frac{Q}{R}\right)e_1=Q+ e_1.$$
Consequently, by~\eqref{ALAM2},
\begin{eqnarray*}&&
U_R^+( Y(P))=U(P)<U(P+e_1)=U(Y(P))\\{\mbox{and }}&&
U_R^+( Y(Q))=U(Q)>U(Q+e_1)=U(Y(Q)).\end{eqnarray*}
Therefore, we see that~$W_R(Y(P))=U(Y(P))$
and~$W_R(Y(Q))=U^+_R(Y(Q))>U(Y(Q))$,
and these observations say that none of the two possibilities in~\eqref{EIT}
can be fulfilled.

This contradiction proves~\eqref{NONAUG}. Then, from~\eqref{NONAUG},
we obtain that there exists~$\delta_0>0$ such that
$$ {\mathcal{J}}_M(W_\star)+\delta_0\le
{\mathcal{J}}_M(W_R).$$
We stress that this~$\delta_0$ is independent of~$R$, because~$W_R$
in~${\mathcal{B}}_M$ does not depend on~$R$, being
$$ W_R(X)=\max\{U(X), \,U(X-e_1)\}\qquad{\mbox{
for all }}X\in{\mathcal{B}}_M^+,$$
thanks to~\eqref{LARGO}.

Furthermore, if we extend~$W_\star$ to be equal to~$W_R$
outside~${\mathcal{B}}_M^+$, we have that
\begin{equation}\label{TRAga}
{\mathcal{J}}_R(W_R)-{\mathcal{J}}_R(W_\star)=
{\mathcal{J}}_M(W_R)-{\mathcal{J}}_M(W_\star)\ge\delta_0.
\end{equation}
Since, by construction~$w_\star(x):=W_\star(x,0)=W_R(x,0)=:w_R(x)$,
we have that~${\mathcal{T}}_R(w_\star)={\mathcal{T}}_R(w_R)$.
This and~\eqref{TRAga} give that
$$ {\mathcal{F}}_{s,\sigma}(W_R,{\mathcal{B}}_R)-
{\mathcal{F}}_{s,\sigma}(W_\star,{\mathcal{B}}_R)\ge\delta_0.$$
As a consequence, in light of~\eqref{SH:923e3},
\begin{equation}\label{0oA73} {\mathcal{F}}_{s,\sigma}(U^+_R,{\mathcal{B}}_R)-
{\mathcal{F}}_{s,\sigma}(W_\star,{\mathcal{B}}_R)\ge\delta_0.\end{equation}
On the other hand, using again~\eqref{TRACCIA2},
$$ {\mathcal{F}}_{s,\sigma}(U,{\mathcal{B}}_R)\le
{\mathcal{F}}_{s,\sigma}(W_\star,{\mathcal{B}}_R).$$
Comparing this and~\eqref{0oA73}, we see that
\begin{equation*} {\mathcal{F}}_{s,\sigma}(U^+_R,{\mathcal{B}}_R)-
{\mathcal{F}}_{s,\sigma}(U,{\mathcal{B}}_R)\ge\delta_0.\end{equation*}
Hence, recalling~\eqref{6.5},
$$ \frac{C}{R^s}\ge\delta_0.$$
We can now send~$R\to+\infty$ and find that~$0\ge\delta_0>0$.
This contradiction proves the validity of~\eqref{ALAM}.

As a consequence of~\eqref{ALAM}, we have that~$u$
is monotone in the direction~$e_1$, hence the cone~$E$
is made of only one component.

{F}rom this and Theorem~1.4 in~\cite{MR3717439}, one also
obtains~\eqref{ANGO}.\end{proof}

\begin{bibdiv}
\begin{biblist}

\bib{MR2675483}{article}{
   author={Caffarelli, L.},
   author={Roquejoffre, J.-M.},
   author={Savin, O.},
   title={Nonlocal minimal surfaces},
   journal={Comm. Pure Appl. Math.},
   volume={63},
   date={2010},
   number={9},
   pages={1111--1144},
   issn={0010-3640},
   review={\MR{2675483}},
   doi={10.1002/cpa.20331},
}

\bib{MR2354493}{article}{
   author={Caffarelli, Luis},
   author={Silvestre, Luis},
   title={An extension problem related to the fractional Laplacian},
   journal={Comm. Partial Differential Equations},
   volume={32},
   date={2007},
   number={7-9},
   pages={1245--1260},
   issn={0360-5302},
   review={\MR{2354493}},
   doi={10.1080/03605300600987306},
}

\bib{MR3317808}{article}{
    AUTHOR = {De Philippis, G.},
    author ={Maggi, F.},
     TITLE = {Regularity of free boundaries in anisotropic capillarity
              problems and the validity of {Y}oung's law},
   JOURNAL = {Arch. Ration. Mech. Anal.},
  FJOURNAL = {Archive for Rational Mechanics and Analysis},
    VOLUME = {216},
      YEAR = {2015},
    NUMBER = {2},
     PAGES = {473--568},
      ISSN = {0003-9527},
   MRCLASS = {35R30 (28A75 35B65)},
  MRNUMBER = {3317808},
MRREVIEWER = {Antoine Henrot},
       DOI = {10.1007/s00205-014-0813-2},
       URL = {https://doi-org.ezproxy.lib.utexas.edu/10.1007/s00205-014-0813-2},
}

\bib{MR3630122}{article}{
    AUTHOR = {De Philippis, G.},
    author={Maggi, F.},
     TITLE = {Dimensional estimates for singular sets in geometric
              variational problems with free boundaries},
   JOURNAL = {J. Reine Angew. Math.},
  FJOURNAL = {Journal f\"{u}r die Reine und Angewandte Mathematik. [Crelle's
              Journal]},
    VOLUME = {725},
      YEAR = {2017},
     PAGES = {217--234},
      ISSN = {0075-4102},
   MRCLASS = {49Q20 (28A75 35R35)},
  MRNUMBER = {3630122},
MRREVIEWER = {Peter I. Kogut},
       DOI = {10.1515/crelle-2014-0100},
       URL = {https://doi-org.ezproxy.lib.utexas.edu/10.1515/crelle-2014-0100},
}

\bib{MR3707346}{article}{
   author={Dipierro, Serena},
   author={Maggi, Francesco},
   author={Valdinoci, Enrico},
   title={Asymptotic expansions of the contact angle in nonlocal capillarity
   problems},
   journal={J. Nonlinear Sci.},
   volume={27},
   date={2017},
   number={5},
   pages={1531--1550},
   issn={0938-8974},
   review={\MR{3707346}},
   doi={10.1007/s00332-017-9378-1},
}

\bib{MR3712006}{article}{
   author={Dipierro, Serena},
   author={Valdinoci, Enrico},
   title={Continuity and density results for a one-phase nonlocal free
   boundary problem},
   journal={Ann. Inst. H. Poincar\'{e} Anal. Non Lin\'{e}aire},
   volume={34},
   date={2017},
   number={6},
   pages={1387--1428},
   issn={0294-1449},
   review={\MR{3712006}},
   doi={10.1016/j.anihpc.2016.11.001},
}

\bib{MR643158}{article}{
   author={Fabes, Eugene B.},
   author={Kenig, Carlos E.},
   author={Serapioni, Raul P.},
   title={The local regularity of solutions of degenerate elliptic
   equations},
   journal={Comm. Partial Differential Equations},
   volume={7},
   date={1982},
   number={1},
   pages={77--116},
   issn={0360-5302},
   review={\MR{643158}},
   doi={10.1080/03605308208820218},
}

\bib{MR816345}{book}{,
    AUTHOR = {Finn, Robert},
     TITLE = {Equilibrium capillary surfaces},
    SERIES = {Grundlehren der Mathematischen Wissenschaften [Fundamental
              Principles of Mathematical Sciences]},
    VOLUME = {284},
 PUBLISHER = {Springer-Verlag, New York},
      YEAR = {1986},
     PAGES = {xvi+245},
      ISBN = {0-387-96174-7},
   MRCLASS = {49-02 (49F10 53-02 53A10 58E12)},
  MRNUMBER = {816345},
MRREVIEWER = {Helmut Kaul},
       DOI = {10.1007/978-1-4613-8584-4},
       URL = {https://doi-org.ezproxy.lib.utexas.edu/10.1007/978-1-4613-8584-4},
}

\bib{MR3717439}{article}{
   author={Maggi, Francesco},
   author={Valdinoci, Enrico},
   title={Capillarity problems with nonlocal surface tension energies},
   journal={Comm. Partial Differential Equations},
   volume={42},
   date={2017},
   number={9},
   pages={1403--1446},
   issn={0360-5302},
   review={\MR{3717439}},
   doi={10.1080/03605302.2017.1358277},
}

\bib{MR0247668}{article}{
   author={Mol\v{c}anov, S. A.},
   author={Ostrovski\u{\i}, E.},
   title={Symmetric stable processes as traces of degenerate diffusion
   processes. },
   language={Russian, with English summary},
   journal={Teor. Verojatnost. i Primenen.},
   volume={14},
   date={1969},
   pages={127--130},
   issn={0040-361x},
   review={\MR{0247668}},
}
	
\bib{MR3090533}{article}{
   author={Savin, Ovidiu},
   author={Valdinoci, Enrico},
   title={Regularity of nonlocal minimal cones in dimension 2},
   journal={Calc. Var. Partial Differential Equations},
   volume={48},
   date={2013},
   number={1-2},
   pages={33--39},
   issn={0944-2669},
   review={\MR{3090533}},
   doi={10.1007/s00526-012-0539-7},
}
		
\bib{MR3035063}{article}{
   author={Savin, Ovidiu},
   author={Valdinoci, Enrico},
   title={Some monotonicity results for minimizers in the calculus of
   variations},
   journal={J. Funct. Anal.},
   volume={264},
   date={2013},
   number={10},
   pages={2469--2496},
   issn={0022-1236},
   review={\MR{3035063}},
   doi={10.1016/j.jfa.2013.02.005},
}

\bib{MR104296}{article}{
   author={Spitzer, Frank},
   title={Some theorems concerning $2$-dimensional Brownian motion},
   journal={Trans. Amer. Math. Soc.},
   volume={87},
   date={1958},
   pages={187--197},
   issn={0002-9947},
   review={\MR{104296}},
   doi={10.2307/1993096},
}

\bib{MR2754080}{article}{
   author={Stinga, Pablo Ra\'{u}l},
   author={Torrea, Jos\'{e} Luis},
   title={Extension problem and Harnack's inequality for some fractional
   operators},
   journal={Comm. Partial Differential Equations},
   volume={35},
   date={2010},
   number={11},
   pages={2092--2122},
   issn={0360-5302},
   review={\MR{2754080}},
   doi={10.1080/03605301003735680},
}

\end{biblist}
\end{bibdiv}

\vfill

\end{document}